\documentclass[11pt]{article}

\usepackage[margin=1in]{geometry}
\usepackage{amsmath,amssymb,amsthm,mathtools}
\usepackage{enumitem}
\usepackage[colorlinks=true,linkcolor=blue,citecolor=blue,urlcolor=blue]{hyperref}
\usepackage[normalem]{ulem}

\newtheorem{theorem}{Theorem}[section]
\newtheorem{lemma}[theorem]{Lemma}
\newtheorem{proposition}[theorem]{Proposition}

\newtheorem{claim}[theorem]{Claim}
\theoremstyle{definition}

\newtheorem{remark}[theorem]{Remark}
\newtheorem{problem}[theorem]{Problem}

\newcommand{\eps}{\varepsilon}

\newcommand{\redsout}[1]{%
  \bgroup
  \markoverwith{\textcolor{red}{\rule[0.5ex]{2pt}{0.4pt}}}%
  \ULon{#1}%
}

\newcommand{\rmynote}[1]{\textcolor{blue}{[RMY: #1]}}

\title{Exact Homomorphism Thresholds Beyond Cliques}
\author{Xinqi Huang\thanks{School of Mathematical Sciences, University of Science and Technology of China, Hefei, China and Extremal Combinatorics and Probability Group (ECOPRO), Institute for Basic Science (IBS), Daejeon, South Korea.
Email:huangxq@mail.ustc.edu.cn. Xinqi Huang was supported by the USTC Excellent PhD Students Overseas, the Institute for Basic Science (IBS-R029-C4), the National Key Research and Development Programs of China 2023YFA1010200, the NSFC under Grants No. 12171452 and No. 12231014 and Innovation Program for Quantum Science and Technology 2021ZD0302902.
}
\and
Mingyuan Rong\thanks{School of Mathematical Sciences, USTC, Hefei, China. Supported by National Key R\&D Program of China 2023YFA1010201, NSFC Grant No. 12125106 and the USTC Excellent PhD Students Overseas Study Program. Email:\texttt{rong\_ming\_yuan@mail.ustc.edu.cn}}
\and Chong Shangguan\thanks{Research Center for Mathematics and Interdisciplinary Sciences, Shandong University, Qingdao 266237, China, and the Frontiers Science Center for Nonlinear Expectations, Ministry of Education, Qingdao 266237, China. Supported by the National Natural Science Foundation of China under Grant Nos. 12571352 and 12231014, and the Fundamental Research Funds for the Central Universities. Email: theoreming@163.com.}
}
\date{}

\begin{document}
\maketitle

\begin{abstract}
The chromatic threshold, originating in a question of Erd\H{o}s and
Simonovits, asks when a linear minimum-degree condition forces bounded
chromatic number in $H$-free graphs. Motivated by a question of Thomassen,
the homomorphism threshold asks for the stronger conclusion that every such
graph admits a homomorphism to an $H$-free graph of bounded order. Since the
work of Goddard and Lyle determined the clique case, exact homomorphism
thresholds for individual non-complete forbidden graphs have remained
unknown.
In this paper, we extend the clique case to a larger family of forbidden
graphs, determining the homomorphism threshold exactly for every graph in
this family.
\end{abstract}

\section{Introduction}

A central theme in extremal graph theory is that an $H$-free graph with
sufficiently large minimum degree is often forced to exhibit strong global
structure and, consequently, bounded complexity. Over the past decades,
minimum-degree thresholds have provided a natural framework for quantifying
how much density is required to guarantee such structure.

The modern study of these threshold phenomena began with the chromatic
threshold problem, proposed by Erd\H{o}s and
Simonovits~\cite{1973ErdosSimonovits}. They introduced what is now known as
the \emph{chromatic threshold} of a graph $H$:
\begin{align*}
\delta_\chi(H):=\inf \{\alpha\ge 0: \exists~C=C(H, d) \textup{ s.t. $\forall~H$-free $G$ with }\delta(G)\ge \alpha |V(G)|
\Rightarrow \text{$\chi(G) \leq C$}\}.
\end{align*}
Following a series of works~\cite{2011Unpubilished,1973ErdosSimonovits,
2011JGTKrChromatic,1982OddCycles,1995DMJin,2010ColoringViaVCDim,
2010arxivKrfree,2002Thomassen,2007OddCycleChromatic},
Allen, B\"ottcher, Griffiths, Kohayakawa, and
Morris~\cite{2013advAllChromatic} completely determined the chromatic
threshold of every graph $H$. In particular, for each fixed chromatic number
$r\geq 3$, the chromatic threshold of an $r$-chromatic graph can take only
three possible values.

Having bounded chromatic number is equivalent to admitting a homomorphism to
a complete graph of bounded order. Motivated by this observation,
Thomassen~\cite{2002Thomassen} proposed a natural strengthening of the
chromatic-threshold problem in which the bounded homomorphic image is itself
required to be $H$-free. Recall that
$G\xrightarrow{\textup{hom}}F$ means that there is an
adjacency-preserving map from $V(G)$ to $V(F)$. The \emph{homomorphism
threshold} of a graph $H$ is defined by
\begin{align*}
    \delta_{\textup{hom}}(H):=
\inf\Bigl\{\alpha\ge 0:\exists\ H\textup{-free }F=F(H,\alpha)\
\textup{s.t. $\forall~H$-free $G$ with }\delta(G)\ge \alpha |V(G)|
\Rightarrow G\xrightarrow{\textup{hom}}F\Bigr\}
\end{align*}
Clearly, $\delta_{\textup{hom}}(H)\ge \delta_\chi(H)$.

The first major results showed that equality holds for cliques.
{\L}uczak~\cite{2006CombTriangleHom} determined the triangle case, and
Goddard and Lyle~\cite{2011JGTKrChromatic} proved that, for every $r\ge 3$,
$\delta_{\textup{hom}}(K_r)
    =
    \delta_\chi(K_r)
    =
    \frac{2r-5}{2r-3}$.
Beyond cliques, however, homomorphism thresholds appear to be much more
difficult to determine. This difficulty is already visible in another
fundamental test case: odd cycles. Ebsen and
Schacht~\cite{2020COMBHomoOddCycle} proved the general upper bound
$\delta_{\textup{hom}}(C_{2k-1})\le 1/(2k-1)$ for every $k\ge 3$.
Unexpectedly, Sankar~\cite{2022Maya} used topological methods to prove that
$\delta_{\textup{hom}}(C_{2k-1})>0$ for every $k\ge 3$. 
This stands in
stark contrast to Thomassen's theorem that
$\delta_\chi(C_{2k-1})=0$ for every $k\ge 3$. 
Beyond these results, very little is known about homomorphism thresholds.
In particular, prior to the present work, exact values had been determined
only for cliques; even for odd cycles, only positivity and an upper bound
were known.

In this paper, we extend the clique case to a broader family of forbidden
graphs. For integers $r\ge 3$, $k\ge 1$, and $1\le s\le r$, let
$T_{r,k}^{s}$ be the graph obtained from $k$ copies of $K_r$ by identifying
a common clique of order $s$ and keeping all remaining vertices disjoint.
For example, $T_{3,k}^{1}$ consists of $k$ triangles sharing a single common
vertex, while $T_{4,k}^{2}$ consists of $k$ copies of $K_4$ sharing a common
edge. In the special case $s=1$, we write $T_{r,k}$ for $T_{r,k}^{1}$.
Notice also that if $k=1$ or $s=r$, then $T_{r,k}^{s}=K_r$.

We completely determine the homomorphism threshold of $T_{r,k}^{s}$ for all
possible choices of $r$, $k$, and $s$.

\begin{theorem}\label{thm:hom-clique-fan}
For integers $r\ge 3$, $k\ge 1$, and $1\le s\le r$, we have
$\delta_{\textup{hom}}(T_{r,k}^{s})=\frac{2r-5}{2r-3}$.
\end{theorem}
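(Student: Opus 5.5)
We prove the two inequalities separately, treating the trivial cases $k=1$ or $s=r$ (where $T^s_{r,k}=K_r$) by the theorem of Goddard and Lyle. So assume $k\ge 2$ and $s<r$.

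\emph{Lower bound.} Since $K_r\subseteq T^s_{r,k}$, we have $\delta_\chi(T^s_{r,k})\ge \delta_\chi(K_r)$, but monotonicity in this direction is not immediate. Instead we use the classification of Allen, B\"ottcher, Griffiths, Kohayakawa and Morris. The graph $T^s_{r,k}$ is $r$-chromatic. We check that it is not near-acyclic, so its chromatic threshold is the top value $\frac{2r-5}{2r-3}$.

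This can also be seen directly. Take the standard Goddard--Lyle/Hajnal-type construction: a Kneser-based triangle-free graph of large chromatic number and minimum degree close to $n/3$, blown up and joined to a complete $(r-3)$-partite graph. The resulting graphs have minimum degree $(\frac{2r-5}{2r-3}-\eps)n$ and unbounded chromatic number. They are $K_r$-free, hence $T^s_{r,k}$-free. Thus $\delta_{\textup{hom}}(T^s_{r,k})\ge\delta_\chi(T^s_{r,k})\ge \frac{2r-5}{2r-3}$.

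\emph{Upper bound.} Fix $\alpha=\frac{2r-5}{2r-3}+\eps$ and let $G$ be $T^s_{r,k}$-free with $\delta(G)\ge\alpha n$. The plan has three steps.

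\begin{enumerate}
\item \emph{Find a bounded set $S$ covering all copies of $K_r$.} If a vertex set $U$ spans an $s$-clique lying in $k$ copies of $K_r$ that are otherwise vertex-disjoint, we get $T^s_{r,k}$. Hence, by a sunflower/greedy argument, for each $s$-clique the copies of $K_r$ through it can be covered by $O_{r,k}(1)$ further vertices. Combined with the removal lemma (or supersaturation), $G$ has only $o(n^r)$ copies of $K_r$. So after deleting $o(n^2)$ edges the graph is $K_r$-free.

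\item \emph{Exploit the degree surplus for exactness.} Since $\alpha$ exceeds $\frac{2r-5}{2r-3}\ge\frac{r-3}{r-2}$ by a constant, Tur\'an-type counting shows that in the near-extremal regime any $r$-clique forces many others sharing an $s$-clique. This would yield $T^s_{r,k}$. From this we deduce that $G$ itself contains a set $B$ of $O(1)$ vertices whose removal destroys all copies of $K_r$, and moreover the graph obtained is still $K_r$-free with minimum degree at least $(\alpha-o(1))n$.

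\item \emph{Build the bounded image.} Apply the Goddard--Lyle homomorphism statement for $K_r$ (in its robust form) to $G-B$, giving $G-B\to F_0$ with $F_0$ a bounded $K_r$-free graph. Then form $F$ from $F_0$ by adding the vertices of $B$ together with their neighbourhood types, refining $F_0$ by the partition of $V(G-B)$ according to adjacency to $B$. This adds at most $2^{|B|}$ types.
\end{enumerate}

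The image $F$ must be $T^s_{r,k}$-free, and this is the main obstacle. A copy of $T^s_{r,k}$ in $F$ may use vertices of $B$ and several type-classes, and it need not lift to $G$ because blown-up classes may have been collapsed. To handle this, we keep in $F$ only edges realised by dense pairs. We then argue via the minimum degree that any $K_r$ in $F$ through $B$ lifts to $\Omega(n)$ disjoint copies in $G$ sharing the same $s$-clique, which would give $T^s_{r,k}$ in $G$. I expect this lifting argument, with Goddard--Lyle's structural description (vertex neighbourhoods essentially determined up to $\eps n$), to be the hardest step and to require $\eps$-dependent bounds on $|F|$.
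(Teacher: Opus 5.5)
Your lower bound is fine. Incidentally, $\delta_\chi(T^s_{r,k})\ge\delta_\chi(K_r)$ \emph{is} immediate, since every $K_r$-free graph is $T^s_{r,k}$-free; that is exactly your direct construction. Also, $\frac{2r-5}{2r-3}$ is the middle value of the trichotomy, not the top one.

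The genuine gap is the upper bound for $s=1$. You correctly note that the image $F$ (a bounded $K_r$-free quotient of $G-B$, refined by adjacency to $B$, together with $B$) need not be $T_{r,k}$-free. But you leave this unresolved, and your proposed remedy fails.
\begin{itemize}
\item Keeping in $F$ only edges realised by dense pairs destroys the homomorphism: an edge of $G$ between two classes spanning few edges then has no image.
\item The claimed lifting is unjustified. A quotient edge between two classes certifies only one edge of $G$. So a blade of a copy of $T_{r,k}$ in $F$ (say two vertices of $B$ plus $r-2$ classes, or a copy whose core is a class) need not come from any $K_r$ of $G$, let alone from $k$ copies meeting in one vertex.
\end{itemize}
The missing idea is the paper's. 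Replace $G$ by a maximal $T_{r,k}$-free supergraph on the same vertex set; degrees only grow, and a homomorphism of the supergraph restricts to $G$. Refine the partition further by homomorphisms of the $K_{r-1}$-free graphs $G[N(v)\setminus F]$, $v\in F$. Then use maximality: each non-edge $a_ib_i$ between two classes completes a copy of $T_{r,k}$. From this one proves that whenever two classes span an edge, the non-edges between them have bounded matching number (Lemma~\ref{lemma:MTTFree}). The proof pigeonholes blade positions and inducts on the overlap of two near-blades, using the clique-counting lemmas. K\"onig's theorem then gives a bounded exceptional set after whose removal $G$ is an honest blow-up of a bounded graph. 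The quotient is then automatically $T_{r,k}$-free, since every copy lifts. Nothing in your plan replaces this step.

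Your Step~2 is also unjustified as written. The principle ``every $r$-clique forces many others sharing an $s$-clique, which yields $T^s_{r,k}$'' would make $G$ itself $K_r$-free, and that is false for $s=1$. Take the Tur\'an graph $T_{r-1}(n)$ plus one edge $uv$ inside a part. It has minimum degree about $\frac{r-2}{r-1}n>\frac{2r-5}{2r-3}n$ and contains $K_r$. Yet it is $T_{r,k}$-free for $k\ge2$, because every $K_r$ contains both $u$ and $v$. Many copies of $K_r$ through one vertex are useless unless they are disjoint outside it.

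The correct mechanism is the Fox--Wigderson-type count (Lemma~\ref{lemma:quadraticCliques} with $\ell=r$). It uses $\delta(G)\ge(\frac{2r-5}{2r-3}+\varepsilon)n$; exceeding $\frac{r-3}{r-2}$ is not enough.
\begin{itemize}
\item For $s\ge2$, it gives inside every $K_r$ an $s$-set whose common neighbourhood spans $\Omega(n^{r-s})$ copies of $K_{r-s}$, hence a copy of $T^s_{r,k}$. So $G$ is $K_r$-free and the $K_r$ blow-up theorem finishes. Your framework covers this case with $B=\emptyset$ once that lemma is supplied.
\item For $s=1$, it only gives a heavy edge in every $K_r$. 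You still need to show that the heavy-edge graph has maximum degree below $k$ and bounded matching number to obtain a bounded $B$.
\end{itemize}
Your Step~1 (removal lemma, $o(n^2)$ deleted edges) contributes nothing to either case.
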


To the best of our knowledge, Theorem~\ref{thm:hom-clique-fan} provides the
first infinite family of non-complete forbidden graphs whose homomorphism
thresholds are determined exactly. In particular, it shows that the
homomorphism-threshold value for a clique remains unchanged when a single
copy of $K_r$ is replaced by several copies of $K_r$ glued along a common.

We conclude this section with a brief overview of the proof.
The lower bound follows directly from the classification of chromatic
thresholds, which gives~\cite{2013advAllChromatic}
$\delta_{\textup{hom}}(T_{r,k}^{s})
\ge
\delta_\chi(T_{r,k}^{s})
=
\frac{2r-5}{2r-3}$.
The main difficulty is therefore to establish the matching upper bound.
When $s\ge 2$, our clique-counting lemma implies that every
$T_{r,k}^{s}$-free graph above the required minimum-degree threshold is
already $K_r$-free. The result then follows from the known structural theorem
for dense $K_r$-free graphs~\cite{2024GraphToGeom}.

The case $s=1$ requires an additional argument.
We prove the following stronger blow-up statement: for every
$\varepsilon>0$, there is a constant $C=C(r,k,s,\varepsilon)$ such that every
maximal $T_{r,k}^{s}$-free graph $G$ on $n$ vertices satisfying
$\delta(G)\ge
\left(\frac{2r-5}{2r-3}+\varepsilon\right)n$
is a blow-up of a $T_{r,k}^{s}$-free graph on at most $C$ vertices.
This statement immediately implies the desired upper bound. Indeed, every
$T_{r,k}^{s}$-free graph can be extended, on the same vertex set, to a
maximal $T_{r,k}^{s}$-free graph without decreasing its minimum degree. A
homomorphism from this maximal extension to a bounded
$T_{r,k}^{s}$-free graph then restricts to a homomorphism from the original
graph.

The main technical ingredient is a common-neighborhood clique-counting lemma
(Lemma~\ref{lemma:low level contains high level}), which strengthens the
dense-edge phenomenon of Fox and Wigderson~\cite{fox2023minimum}. Roughly
speaking, the lemma asserts that if the common neighborhood of a tuple
contains many copies of a smaller clique, then deleting one vertex from the
tuple produces a common neighborhood containing many copies of a larger
clique.

To prove the case $s=1$, we call an edge \emph{heavy} if its common
neighborhood contains many copies of $K_{r-2}$. The graph consisting of the
heavy edges has bounded matching number and hence admits a vertex cover of
bounded size. After deleting this vertex cover, we obtain a $K_r$-free
graph. We then apply the clique homomorphism-threshold theorem both to the
remaining graph and to certain neighborhood subgraphs. Finally, further
matching and vertex-cover arguments allow us to refine the resulting bounded
partition until every pair of parts is either complete or empty. This yields
a bounded blow-up quotient, and a lifting argument shows that the quotient is
itself $T_{r,k}$-free.


\section{Preliminaries and the clique-counting tool}\label{sec:tools}

Here we first list various useful tools. For graphs $H$ and $G$, define
\[
C(H,G):=\bigl|\{\,S\subseteq V(G):\ G[S]\cong H\,\}\bigr|,
\]
that is, the number of vertex-subsets inducing a copy of $H$. For a set
$W\subseteq V(G)$, let $N(W)$ denote its common neighborhood, with the
convention that $N(\emptyset)=V(G)$. The first lemma is based on a simple
pigeonhole-principle argument.

\begin{lemma}\label{lemma:largedegree many common neighbours}
Let $1\le p\le q$ be integers. Let $G$ be an $n$-vertex graph with
$\delta(G)\ge \delta n$. Assume that
$\delta>\frac{p-1}{q}$ and
\(
\bigl(\delta-\frac{p-1}{q}\bigr)n>2(q-p).
\)
Then, for every set $V=\{v_1,\dots,v_q\}\subseteq V(G)$ of $q$ distinct
vertices, there exists a subset $V_0\subseteq V$ with $|V_0|=p$ such that
\(
|N(V_0)|\ge \alpha n,
\)
where
\(
\alpha:=\frac{q\delta-p+1}{2\binom{q}{p}(q-p+1)}>0.
\)
\end{lemma}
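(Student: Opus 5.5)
We argue by double counting over the vertices of $G$. For each $u\in V(G)$, let $d_V(u):=|N(u)\cap V|$ be the number of vertices of $V$ adjacent to $u$. Since every $v_i$ has degree at least $\delta n$, we have
\[
\sum_{u\in V(G)} d_V(u)=\sum_{i=1}^q \deg(v_i)\ge q\delta n .
\]
A vertex $u$ lies in the common neighbourhood $N(V_0)$ of a $p$-subset $V_0\subseteq V$ exactly when $V_0\subseteq N(u)\cap V$. So the total count is
\[
\sum_{V_0\in\binom{V}{p}} |N(V_0)|=\sum_{u} \binom{d_V(u)}{p}.
\]
By pigeonhole, it suffices to show that this sum is at least $\binom{q}{p}\alpha n$. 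Some $V_0$ then has $|N(V_0)|\ge\alpha n$.

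To bound $\sum_u\binom{d_V(u)}{p}$ from below, we use a linear lower bound on $\binom{d}{p}$ for integers $0\le d\le q$:
\[
\binom{d}{p}\ \ge\ \frac{d-(p-1)}{q-p+1}\qquad\text{for all } d\le q .
\]
The right-hand side is nonpositive when $d\le p-1$, so the inequality holds trivially there. For $p\le d\le q$ we have $\binom{d}{p}\ge1\ge \frac{d-p+1}{q-p+1}$. Summing over all $u$ gives
\[
\sum_u\binom{d_V(u)}{p}\ \ge\ \frac{\sum_u d_V(u)-(p-1)n}{q-p+1}\ \ge\ \frac{(q\delta-p+1)n}{q-p+1}.
\]
Dividing by $\binom{q}{p}$, some $V_0$ has $|N(V_0)|\ge \frac{(q\delta-p+1)n}{\binom{q}{p}(q-p+1)}$, which is $2\alpha n\ge\alpha n$. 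Positivity of $\alpha$ follows from $\delta>\frac{p-1}{q}$.

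This argument does not use the hypothesis $\bigl(\delta-\frac{p-1}{q}\bigr)n>2(q-p)$. The factor $2$ in $\alpha$ and that hypothesis presumably absorb a technicality in the authors' own argument. One such technicality is whether $u$ ranges over all vertices or only over $V(G)\setminus V$. If we insist that $N(V_0)$ be disjoint from $V$, we can discard the at most $q$ vertices of $V$, which costs at most $q\binom{q}{p}$ in the sum. The assumption $\bigl(\delta-\frac{p-1}{q}\bigr)n>2(q-p)$ is exactly what guarantees that the halved bound still holds after removing an additive loss of order $q-p$ from $(q\delta-p+1)n$. I expect the only delicate point to be this bookkeeping of the additive error against the factor $2$. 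The core linearization $\binom{d}{p}\ge\frac{d-p+1}{q-p+1}$ is the key step and is routine.
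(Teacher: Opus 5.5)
Your proof is correct and is essentially the paper's double count: the linearization $\binom{d}{p}\ge\frac{d-p+1}{q-p+1}$ is the same estimate as the paper's bound $|W|\le(p-1)(|U|-M)+qM$ combined with $M\le|\mathcal P|$, which the paper runs by contradiction rather than directly. The one real difference is that the paper sums only over $U=V(G)\setminus V$, losing an additive $q(q-p)$ in the degree count, and that loss is exactly what the hypothesis $\bigl(\delta-\frac{p-1}{q}\bigr)n>2(q-p)$ and the factor $2$ in $\alpha$ absorb; summing over all of $V(G)$, as you do, is legitimate for the statement as written and gives the stronger bound $2\alpha n$ without that hypothesis (so your guess about its role is right, though the loss it absorbs is this $q(q-p)$, not a $q\binom{q}{p}$ loss in the final sum).
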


\begin{proof}[Proof of Lemma~\ref{lemma:largedegree many common neighbours}]
Fix $V=\{v_1,\dots,v_q\}\subseteq V(G)$ and write
$U:=V(G)\setminus V$. Assume for contradiction that
$|N(S)|<\alpha n$ for every $p$-subset $S\subseteq V$. In particular,
\begin{equation}\label{eq:star}
|N(S)\cap U|<\alpha n
\qquad\text{for all }S\in\binom{V}{p}.
\end{equation}
Define
\(
W:=\{(i,u):\ i\in[q],\ u\in U,\ v_i u\in E(G)\}.
\)
Then
\begin{equation}\label{eq:W-lower}
|W|=\sum_{i=1}^q |N(v_i)\cap U|
\ge \sum_{i=1}^q\bigl(\delta n-(q-1)\bigr)
= q\delta n-q(q-1).
\end{equation}
For $u\in U$, let $s(u):=|N(u)\cap V|$. Then
$|W|=\sum_{u\in U}s(u)$. Let
\(
M:=\bigl|\{u\in U:\ s(u)\ge p\}\bigr|.
\)
Consider the set of pairs
\[
\mathcal{P}:=\{(S,u):\ S\in\binom{V}{p},\ u\in U,\ S\subseteq N(u)\}.
\]
By \eqref{eq:star}, each $S\in\binom{V}{p}$ has fewer than $\alpha n$
common neighbors in $U$, and hence
\(
|\mathcal{P}|<\binom{q}{p}\alpha n.
\)
On the other hand, every $u\in U$ with $s(u)\ge p$ contributes at least one
pair $(S,u)\in\mathcal{P}$, so $|\mathcal{P}|\ge M$. Therefore,
\(
M<\binom{q}{p}\alpha n.
\)

To upper-bound $|W|=\sum_{u\in U}s(u)$, notice that vertices with
$s(u)\le p-1$ contribute at most $p-1$ each, while vertices with
$s(u)\ge p$ contribute at most $q$ each. Therefore,
\begin{equation}\label{eq:W-upper}
|W|\le (p-1)(|U|-M)+qM
\le (p-1)(n-q)+(q-p+1)M
< (p-1)(n-q)+(q-p+1)\binom{q}{p}\alpha n.
\end{equation}
Combining \eqref{eq:W-lower} and \eqref{eq:W-upper} yields
\begin{equation}\label{eq:key}
\Bigl(q\delta-(p-1)-(q-p+1)\binom{q}{p}\alpha\Bigr)n
<q(q-p).
\end{equation}
By the definition of $\alpha$, the coefficient on the left-hand side of
\eqref{eq:key} equals
\[
q\delta-(p-1)-\frac{q\delta-p+1}{2}
=\frac{q\delta-p+1}{2}.
\]
Hence \eqref{eq:key} implies
\(
\frac{q\delta-p+1}{2}\,n<q(q-p),
\)
contradicting
\[
(q\delta-p+1)n
=q\Bigl(\delta-\frac{p-1}{q}\Bigr)n
>2q(q-p).
\]
This completes the proof.
\end{proof}

We next generalize the preceding result.

\begin{lemma}\label{lemma:low level contains high level}
Let \(r\), \(k\), and \(t\) be positive integers satisfying \(r\ge 3\) and
\(k+2t\le 2r-3\). For every \(\varepsilon>0\) and \(\alpha>0\), there exist
\(n_0=n_0(r,\varepsilon,\alpha)\) and
\(\alpha'=\alpha'(r,\varepsilon,\alpha)>0\) such that the following holds.
Let \(G\) be an \(n\)-vertex graph with \(n\ge n_0\) and
\(
\delta(G)\ge \bigl(\frac{2r-5}{2r-3}+\varepsilon\bigr)n.
\)
Suppose that there exists a subset
\(W=\{w_1,\dots,w_k\}\subseteq V(G)\) such that
$C(K_t,G[N(W)])\ge \alpha n^t$.

Then there exists a subset \(W'\subseteq W\) with \(|W'|=k-1\) such that
$C(K_{t+1},G[N(W')])\ge \alpha' n^{t+1}$.
\end{lemma}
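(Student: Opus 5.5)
The plan is to apply Lemma~\ref{lemma:largedegree many common neighbours} not to $W\cup Q$ for a single $t$-clique $Q$, but to $W\cup Q_1\cup Q_2$ for two vertex-disjoint $t$-cliques $Q_1,Q_2$ in $G[N(W)]$. Using two cliques is what the hypothesis $k+2t\le 2r-3$ is designed for. With a single clique, the vertex dropped by the pigeonhole lemma could lie in $Q$, and that gives no progress. With two cliques, dropping a vertex of one clique still leaves the other clique intact, and it then extends.

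\textbf{Step 1: set up the pigeonhole lemma.} Take $q=k+2t$ and $p=q-1$. Lemma~\ref{lemma:largedegree many common neighbours} requires $\delta>\frac{q-2}{q}=1-\frac{2}{q}$. Since $\delta\ge \frac{2r-5}{2r-3}+\varepsilon=1-\frac{2}{2r-3}+\varepsilon$ and $q\le 2r-3$, this holds with slack at least $\varepsilon$. The second condition, $(\delta-\frac{p-1}{q})n>2(q-p)=2$, holds once $n\ge n_0$. So for every $(k+2t)$-set there is a $(k+2t-1)$-subset whose common neighbourhood has size at least $\beta n$, where $\beta=\beta(r,\varepsilon)>0$. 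Here $\beta$ is uniform because $q\le 2r-3$ takes boundedly many values.

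\textbf{Step 2: count ordered pairs of cliques.} Since $C(K_t,G[N(W)])\ge\alpha n^t$, the number of ordered pairs $(Q_1,Q_2)$ of vertex-disjoint $t$-cliques in $G[N(W)]$ is at least $\alpha^2n^{2t}-O(n^{2t-1})\ge \frac{\alpha^2}{2}n^{2t}$ for large $n$. The vertices of $W$, $Q_1$, $Q_2$ are distinct, because $N(W)\cap W=\emptyset$. For each pair, Step~1 yields one dropped vertex $y$ with $|N((W\cup Q_1\cup Q_2)\setminus\{y\})|\ge\beta n$. The dropped vertex lies in $W$, in $Q_1$, or in $Q_2$. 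By pigeonhole, one of these three cases occurs for at least $\frac{\alpha^2}{6}n^{2t}$ pairs.

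\textbf{Step 3: extract many $(t+1)$-cliques in each case.}
\begin{itemize}
\item \emph{Dropped vertex in $Q_1$.} Then every $x$ in the large common neighbourhood lies in $N(W\cup Q_2)$, so $Q_2\cup\{x\}$ is a $(t+1)$-clique in $N(W)$. Call $Q_2$ good if it occurs in such a pair. Each $Q_2$ has at most $n^t$ partners, so there are at least $\frac{\alpha^2}{6}n^t$ good cliques $Q_2$. Each good $Q_2$ satisfies $|N(W\cup Q_2)|\ge\beta n$.
\item \emph{Dropped vertex in $Q_2$.} This is symmetric to the previous case.
\item \emph{Dropped vertex $w_i\in W$.} Pigeonhole over $i\in[k]$, fix the most frequent $i$, and set $W'=W\setminus\{w_i\}$. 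Then every good $Q_1$ has at least $\beta n$ common neighbours in $N(W'\cup Q_1)$, and $Q_1\subseteq N(W)\subseteq N(W')$.
\end{itemize}
In the first two cases $N(W)\subseteq N(W')$ for any $(k-1)$-subset $W'$, so any choice of $W'$ works.

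In every case we have at least $c\,n^t$ $t$-cliques in $G[N(W')]$, each with at least $\beta n$ extensions to a $(t+1)$-clique. Each $(t+1)$-clique is counted at most $t+1$ times. This gives $C(K_{t+1},G[N(W')])\ge \frac{c\beta}{t+1}n^{t+1}$, so we may take $\alpha'=\frac{\alpha^2\beta}{6k(t+1)}$. Since $k,t\le 2r$, this constant depends only on $r,\varepsilon,\alpha$.

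\textbf{Main obstacle.} The key difficulty is seeing that a single clique does not suffice, which is why the argument takes two cliques. Once that choice is made, the remaining work is the bookkeeping in Step~3. That step converts "many pairs" into "many cliques, each with linearly many extensions", and it needs the uniformity of the pigeonhole constant $\beta$ across all pairs.
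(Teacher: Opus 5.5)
Your proof is correct and rests on the same key idea as the paper: apply Lemma~\ref{lemma:largedegree many common neighbours} with $q=k+2t$, $p=q-1$ to $W$ together with two vertex-disjoint $t$-cliques of $G[N(W)]$, so that dropping a vertex of one clique still leaves the other clique intact. The only difference is bookkeeping. The paper first checks whether half of the $t$-cliques $T$ already satisfy $|N(W\cup T)|\ge(\gamma/2)n$; if not, it fixes one bad clique $T_0$, which forces the dropped vertex into $W\cup T_0$ for every partner $T$. You instead average over all ordered pairs and pigeonhole on where the dropped vertex lies. This makes $\alpha'$ quadratic rather than linear in $\alpha$, which is harmless even for the later polynomial bound on $\alpha_{\mathrm h}^{-1}$, since the lemma is only iterated $O(r)$ times.
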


\begin{proof}[Proof of Lemma~\ref{lemma:low level contains high level}]
Let \(\mathcal T\) be the family of all \(t\)-subsets of \(V(G)\) spanning
copies of \(K_t\) in \(G[N(W)]\). Then
\[
|\mathcal T|=C(K_t,G[N(W)])\ge \alpha n^t.
\]

Set
\(
d:=\frac{2r-5}{2r-3}+\varepsilon.
\)
We first fix a constant \(\gamma=\gamma(r,\varepsilon)>0\). Apply
Lemma~\ref{lemma:largedegree many common neighbours} with
\(
q:=k+2t
\)
and
\(
p:=q-1=k+2t-1.
\)
Indeed,
\[
\frac{p-1}{q}
=\frac{k+2t-2}{k+2t}
\le \frac{2r-5}{2r-3}
<d,
\]
where the first inequality follows from \(k+2t\le 2r-3\). Moreover, since
\(d-\frac{p-1}{q}\ge\varepsilon\), the remaining numerical hypothesis of
Lemma~\ref{lemma:largedegree many common neighbours} holds whenever \(n\)
is sufficiently large in terms of \(r\) and \(\varepsilon\). The constant
provided by that lemma is
\(
\frac14\bigl(d-\frac{q-2}{q}\bigr).
\)
Since \(3\le q\le 2r-3\), we may choose \(\gamma>0\) uniformly in \(k\)
and \(t\), depending only on \(r\) and \(\varepsilon\). It follows that
every \(q\)-set of vertices contains a \((q-1)\)-subset with at least
\(\gamma n\) common neighbors.

Set
\(
\beta:=\frac{\gamma}{2}
\)
and define
$\mathcal T_1
:=
\{\,T\in\mathcal T:\ |N(W\cup T)|\ge \beta n\,\}$.
For each \(T\in\mathcal T_1\), every vertex in \(N(W\cup T)\) extends
\(T\) to a copy of \(K_{t+1}\) inside \(G[N(W)]\). Thus, by double
counting pairs \((T,v)\) with \(T\in\mathcal T_1\) and
\(v\in N(W\cup T)\), we obtain
\begin{equation}\label{eq:easy-ineq-Kt1}
C(K_{t+1},G[N(W)])
\ge \frac{\beta n\,|\mathcal T_1|}{t+1}.
\end{equation}

If
\(
|\mathcal T_1|\ge \frac{|\mathcal T|}{2}
\ge \frac{\alpha}{2}n^t,
\)
then \eqref{eq:easy-ineq-Kt1} gives
\[
C(K_{t+1},G[N(W)])
\ge \frac{\beta\alpha}{2(t+1)}\,n^{t+1}.
\]
Since \(N(W)\subseteq N(W')\) for every \(W'\subseteq W\), any subset
\(W'\subseteq W\) of size \(k-1\) satisfies
\[
C(K_{t+1},G[N(W')])
\ge C(K_{t+1},G[N(W)])
\ge \frac{\beta\alpha}{2(t+1)}\,n^{t+1}.
\]
Thus the conclusion holds in this case.

It remains to consider the case
\(
|\mathcal T_1|<\frac{|\mathcal T|}{2}.
\)
Choose
\(
T_0\in\mathcal T\setminus\mathcal T_1.
\)
Then
\(
|N(W\cup T_0)|<\beta n.
\)
Let
$\mathcal T_2
:=
\{\,T\in\mathcal T:\ T\cap T_0=\emptyset\,\}$.
The number of \(t\)-subsets of \(V(G)\) intersecting the fixed \(t\)-set
\(T_0\) is at most
\(
t\binom{n}{t-1}\le t n^{t-1}.
\)
Consequently, provided that \(n\ge 2t/\alpha\), we have
\begin{equation}\label{eq:T2-lower}
|\mathcal T_2|
\ge |\mathcal T|-t n^{t-1}
\ge \alpha n^t-t n^{t-1}
\ge \frac{\alpha}{2}n^t.
\end{equation}

Fix \(T\in\mathcal T_2\), and consider
\(
V:=W\cup T_0\cup T.
\)
Since \(T_0,T\subseteq N(W)\) and \(T_0\cap T=\emptyset\), these three
sets are pairwise disjoint, and hence
\(
|V|=k+2t=q.
\)
By the choice of \(\gamma\), there exists a subset \(V_0\subseteq V\) with
\(
|V_0|=q-1=k+2t-1
\)
and
\(
|N(V_0)|\ge \gamma n.
\)

We claim that
\(
T\subseteq V_0.
\)
Indeed, if \(T\nsubseteq V_0\), then, since \(|V\setminus V_0|=1\), the
unique omitted vertex lies in \(T\). It follows that
\(
W\cup T_0\subseteq V_0.
\)
Therefore
\(
N(V_0)\subseteq N(W\cup T_0),
\)
and hence
\[
|N(V_0)|
\le |N(W\cup T_0)|
<\beta n
=\frac{\gamma}{2}n,
\]
contradicting \(|N(V_0)|\ge\gamma n\). Thus \(T\subseteq V_0\), as
claimed.

Since \(|V\setminus V_0|=1\) and \(T\subseteq V_0\), the omitted vertex
lies in \(W\cup T_0\). In particular,
\(
|V_0\cap W|\ge k-1.
\)
Choose a subset
\(
W_T\subseteq V_0\cap W
\)
with
\(
|W_T|=k-1.
\)
Then
\(
W_T\cup T\subseteq V_0,
\)
so
\(
N(V_0)\subseteq N(W_T\cup T),
\)
and therefore
\begin{equation}\label{eq:WTT-large}
|N(W_T\cup T)|
\ge |N(V_0)|
\ge \gamma n.
\end{equation}

There are exactly \(\binom{k}{k-1}=k\) possible subsets of \(W\) of
order \(k-1\). By the pigeonhole principle, there exists a fixed subset
\(
W'\subseteq W
\)
with \(|W'|=k-1\) such that \(W_T=W'\) for at least
\(|\mathcal T_2|/k\) sets \(T\in\mathcal T_2\). For every such \(T\),
\eqref{eq:WTT-large} gives
\(
|N(W'\cup T)|\ge\gamma n.
\)
Every vertex \(v\in N(W'\cup T)\) extends \(T\) to a copy of \(K_{t+1}\)
in \(G[N(W')]\). Therefore, by double counting,
\[
C(K_{t+1},G[N(W')])
\ge \frac{1}{t+1}\cdot\frac{|\mathcal T_2|}{k}\cdot\gamma n.
\]
Using \eqref{eq:T2-lower}, we obtain
\[
C(K_{t+1},G[N(W')])
\ge \frac{1}{t+1}\cdot\frac{\alpha}{2k}n^t\cdot\gamma n
=\frac{\alpha\gamma}{2k(t+1)}\,n^{t+1}.
\]

Since \(t+1\le r\) and \(k\le 2r-5\), both cases imply the conclusion
with
$\alpha'
:=
\min\left\{
\frac{\alpha\beta}{2r},
\frac{\alpha\gamma}{2r(2r-5)}
\right\}>0$.
This constant depends only on \(r\), \(\varepsilon\), and \(\alpha\).
This completes the proof.
\end{proof}

We also have the following easy corollary of
Lemma~\ref{lemma:low level contains high level}.

\begin{lemma}\label{lemma:quadraticCliques}
Let $r,\ell,t$ be integers satisfying $r\ge 3$, $1\le t\le\ell$, and
$\ell+t\le 2r-2$. For every $\varepsilon>0$, there exist
$n_0=n_0(r,\varepsilon)$ and $\alpha=\alpha(r,\varepsilon)>0$ such that
the following holds. Let $G$ be an $n$-vertex graph with $n\ge n_0$ and
\(
\delta(G)\ge \bigl(\frac{2r-5}{2r-3}+\varepsilon\bigr)n.
\)

Then, for any set $W=\{w_1,\ldots,w_\ell\}\subseteq V(G)$, there exists
a subset $W'\subseteq W$ with $|W'|=\ell-t$ such that
$C\bigl(K_t,G[N(W')]\bigr)\ge \alpha n^t$.
\end{lemma}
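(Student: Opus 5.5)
We argue by induction on $t$, starting from a base case that is obtained directly from Lemma~\ref{lemma:largedegree many common neighbours}. Each further step is one application of Lemma~\ref{lemma:low level contains high level}. The intended chain is
\[
W=W_0\supseteq W_1\supseteq\cdots\supseteq W_{t},\qquad |W_j|=\ell-j,
\]
with the invariant $C(K_{j},G[N(W_j)])\ge \alpha_j n^{j}$ for $j\ge 1$. Each step removes one vertex from the tuple and adds one to the clique size.

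\textbf{Base case $t=1$.} We need a subset $W_1\subseteq W$ of size $\ell-1$ with $|N(W_1)|\ge\alpha_1 n$, since $C(K_1,\cdot)$ is just the number of vertices. Apply Lemma~\ref{lemma:largedegree many common neighbours} with $q=\ell$ and $p=\ell-1$. Its hypothesis requires $\frac{\ell-2}{\ell}<\frac{2r-5}{2r-3}+\varepsilon$. This holds because $\ell\le \ell+t-1\le 2r-3$ gives $\frac{\ell-2}{\ell}\le\frac{2r-5}{2r-3}$. The numerical condition $\bigl(\delta-\frac{p-1}{q}\bigr)n>2$ holds for $n\ge n_0(r,\varepsilon)$. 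The resulting constant $\alpha_1$ depends only on $r$ and $\varepsilon$, since $\ell\le 2r-3$ takes boundedly many values and we can take the minimum over them. There is a degenerate case $\ell=1$, $t=1$: then $W'=\emptyset$, $N(\emptyset)=V(G)$, and $C(K_1,G)=n$, so it is trivial. (For $\ell=2$ we get $p=1$, and the lemma simply says that some single vertex has linear degree, which is immediate anyway.)

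\textbf{Inductive step $j\to j+1$ for $1\le j\le t-1$.} Apply Lemma~\ref{lemma:low level contains high level} with tuple $W_j$, $k=\ell-j$ and clique size $j$. We check its hypotheses: $k\ge 1$ because $j<t\le\ell$, and
\[
k+2j=\ell+j\le \ell+t-1\le 2r-3.
\]
The lemma yields $W_{j+1}\subseteq W_j$ of size $\ell-j-1$ with $C(K_{j+1},G[N(W_{j+1})])\ge\alpha_{j+1}n^{j+1}$, where $\alpha_{j+1}=\alpha'(r,\varepsilon,\alpha_j)$. After $t-1$ steps, $W'=W_t$ has size $\ell-t$ and satisfies the conclusion. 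We take $\alpha$ as the minimum of $\alpha_t$ over the finitely many admissible pairs $(\ell,t)$, which all have $\ell+t\le 2r-2$. Likewise, $n_0$ is the maximum of the finitely many thresholds that arise. Hence both depend only on $(r,\varepsilon)$.

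The main point needing care is the index bookkeeping. The constraint $k+2t\le 2r-3$ in Lemma~\ref{lemma:low level contains high level} must hold at every step, and it does, because $k+2j=\ell+j$ increases with $j$ and is largest at $j=t-1$, where it equals $\ell+t-1\le 2r-3$. The other point is uniformity of the constants, which follows from there being finitely many choices of $\ell$, $t$ and $j$.
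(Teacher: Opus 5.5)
Your proposal is correct and follows the paper's proof essentially step for step: induction on $t$, with the base case $t=1$ handled by Lemma~\ref{lemma:largedegree many common neighbours} with $q=\ell$, $p=\ell-1$ (and $W'=\emptyset$ when $\ell=1$), and each step $j\to j+1$ given by Lemma~\ref{lemma:low level contains high level} with $k=\ell-j$, where $k+2j=\ell+j\le \ell+t-1\le 2r-3$ is exactly the hypothesis check the paper makes. Your treatment of uniformity of the constants, taking the minimum (respectively maximum) over the finitely many admissible pairs $(\ell,t)$, also matches the paper.
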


\begin{proof}[Proof of Lemma~\ref{lemma:quadraticCliques}]
We argue by induction on \(t\).

Suppose first that \(t=1\). If \(\ell=1\), take \(W'=\emptyset\). By the
convention \(N(\emptyset)=V(G)\), we have
\(
C(K_1,G[N(W')])=n,
\)
so the conclusion holds with \(\alpha_1=1\).

We may therefore assume that \(\ell\ge 2\). Since
\(\ell+1\le 2r-2\), we have \(\ell\le 2r-3\), and hence
\(
\frac{\ell-2}{\ell}\le \frac{2r-5}{2r-3}
<\frac{\delta(G)}{n}.
\)
Therefore Lemma~\ref{lemma:largedegree many common neighbours}, applied
with
\(
q=\ell
\)
and
\(
p=\ell-1,
\)
yields a subset \(W_1\subseteq W\) of size \(\ell-1\) such that
\(
|N(W_1)|\ge\alpha_1n
\)
for some constant \(\alpha_1=\alpha_1(r,\varepsilon)>0\). Since
\(
C(K_1,G[N(W_1)])=|N(W_1)|,
\)
the desired conclusion follows in the base case.

Now assume that \(t\ge2\), and suppose that the statement has already
been proved for \(t-1\). Applying the induction hypothesis to the same
\(\ell\)-set \(W\), we obtain a subset
\(
W_{t-1}\subseteq W
\)
with
\(
|W_{t-1}|=\ell-(t-1)=\ell-t+1
\)
such that
$C\bigl(K_{t-1},G[N(W_{t-1})]\bigr)
\ge \alpha_{t-1}n^{t-1}$
for some constant \(\alpha_{t-1}=\alpha_{t-1}(r,\varepsilon)>0\).

We now apply Lemma~\ref{lemma:low level contains high level} with
\(
k:=|W_{t-1}|=\ell-t+1
\)
and with \(t-1\) in place of \(t\). The required numerical condition is
\(
k+2(t-1)\le 2r-3.
\)
Indeed,
\(
k+2(t-1)=(\ell-t+1)+2t-2=\ell+t-1\le 2r-3,
\)
where the last inequality follows from \(\ell+t\le2r-2\). Hence
Lemma~\ref{lemma:low level contains high level} gives a subset
\(
W_t\subseteq W_{t-1}
\)
with
\(
|W_t|=k-1=\ell-t
\)
such that
$C\bigl(K_t,G[N(W_t)]\bigr)\ge \alpha_t n^t$,
where \(\alpha_t=\alpha_t(r,\varepsilon,\alpha_{t-1})>0\).

Since \(t\) is fixed and each constant \(\alpha_j\) depends only on
\(r\) and \(\varepsilon\), we may set
\[
\alpha:=\min\{\alpha_1,\alpha_2,\dots,\alpha_t\}>0.
\]
After taking the minimum over the finitely many admissible choices of
\(\ell\) and \(t\), if necessary, the constant \(\alpha\) may be chosen
to depend only on \(r\) and \(\varepsilon\). The subset \(W':=W_t\)
then satisfies
\(
|W'|=\ell-t
\)
and
\(
C\bigl(K_t,G[N(W')]\bigr)\ge\alpha n^t.
\)
This completes the proof.
\end{proof}

\begin{remark}
For the special choice $\ell=r$ and $t=r-2$,
Lemma~\ref{lemma:quadraticCliques} recovers the dense-edge phenomenon
used by Fox and Wigderson~\cite{fox2023minimum}. Indeed, given any copy
of $K_r$ in $G$, the lemma gives a $2$-subset
$W'\subseteq V(K_r)$ such that
$C\bigl(K_{r-2},G[N(W')]\bigr)\ge \alpha n^{r-2}$.
Since $W'$ is an edge of this copy of $K_r$, this edge lies in at least
$\alpha n^{r-2}$ copies of $K_r$.
\end{remark}

More importantly, Lemma~\ref{lemma:low level contains high level} has a
crucial inductive form: it turns many smaller cliques in the common
neighborhood of a tuple into many larger cliques after discarding one
vertex from the tuple. Iterating this step allows us to pass between
clique extensions of different sizes and common cores of different
orders, which is essential for handling several cliques sharing a common
core. In this sense, the lemma is the key clique-amplification tool in
the paper and may be of independent interest.


We next record two standard external tools that will be used in the proof
of the main theorem. The first is the known homomorphism-threshold upper
bound and blow-up theorem for cliques.

\begin{theorem}[\cite{2024GraphToGeom}]\label{thm:KrHomo}
For every $r\ge3$ and $\varepsilon>0$, every $n$-vertex $K_r$-free graph
$G$ satisfying
$\delta(G)\ge
\left(\frac{2r-5}{2r-3}+\varepsilon\right)n$
is homomorphic to a $K_r$-free graph $H$ with
$|V(H)|\le 2^{(1/\varepsilon)^C}$ for some constant $C=C(r)>0$.
Moreover, if $G$ is maximal $K_r$-free, then $H$ may be chosen so that
$G$ is a blow-up of $H$.
\end{theorem}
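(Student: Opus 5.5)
Since Theorem~\ref{thm:KrHomo} is quoted from~\cite{2024GraphToGeom}, we only outline how we would reprove it from scratch. We would reduce to the maximal case first. Every $K_r$-free $G$ can be extended on the same vertex set to a maximal $K_r$-free graph without lowering $\delta(G)$. A homomorphism of the extension restricts to one of $G$. So it suffices to show that a maximal $K_r$-free $G$ with $\delta(G)\ge(\frac{2r-5}{2r-3}+\varepsilon)n$ has at most $2^{(1/\varepsilon)^C}$ \emph{twin classes}, i.e.\ classes of vertices with identical neighborhoods. Granting this, the quotient $H$ by twin classes is $K_r$-free and $G$ is a blow-up of $H$: twins are nonadjacent, and a $K_r$ in $H$ lifts to one in $G$ by choosing one vertex per class.

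To bound the number of twin classes, we would proceed in three steps.
\begin{enumerate}[label=(\roman*)]
\item Show that the neighborhood set system $\{N(v):v\in V(G)\}$ has VC-dimension bounded by some $d=d(r,\varepsilon)$, or at least that it avoids a fixed bounded configuration. The idea is that a large shattered set $S$, combined with the degree condition and Lemma~\ref{lemma:largedegree many common neighbours}, produces many vertices whose neighborhoods realize all patterns on $S$. The pigeonhole counting behind that lemma (with $q\le 2r-3$ and threshold $\frac{2r-5}{2r-3}$) should then force a copy of $K_r$.
\item Apply Haussler's packing lemma. This gives a set $R$ of at most $(1/\varepsilon)^{O(d)}$ representatives such that every vertex $v$ has some $\rho(v)\in R$ with $|N(v)\triangle N(\rho(v))|\le \varepsilon' n$.
\item Use maximality to upgrade approximate twins to exact structure. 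We would show that the pair $\bigl(\rho(v),\ \{\rho(u):u\in N(v)\}\bigr)$ determines $N(v)$ exactly. This pair ranges over at most $|R|\,2^{|R|}=2^{(1/\varepsilon)^{O(1)}}$ values, which matches the claimed bound.
\end{enumerate}

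For step (iii), suppose $u$ and $v$ share this data but some $w\in N(u)\setminus N(v)$ exists. By maximality, adding $vw$ creates a $K_r$. Hence $N(v)\cap N(w)$ contains a $K_{r-2}$, say with vertex set $Q$. We then want to transfer this configuration to $u$. Since $|N(u)\triangle N(v)|$ is small, we can re-choose $Q$ among the $\Omega(n^{r-2})$ copies of $K_{r-2}$ in the common neighborhood supplied by Lemma~\ref{lemma:quadraticCliques} (applied with $\ell=r$, $t=r-2$). The aim is to obtain $Q\subseteq N(u)\cap N(w)$, so that $Q\cup\{u,w\}$ is a $K_r$ in $G$, a contradiction. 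The quantitative versions of Lemmas~\ref{lemma:low level contains high level} and~\ref{lemma:quadraticCliques} are what make robust copies available here.

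The main obstacle is step (i), bounding the VC-dimension. This is where the exact constant $\frac{2r-5}{2r-3}$ matters: just below it, the Hajnal-type and Borsuk-graph constructions have unbounded complexity. We would first try induction on $r$. Each neighborhood $G[N(v)]$ is $K_{r-1}$-free, and its relative minimum degree exceeds $\frac{2r-7}{2r-5}$, since $\frac{2d-1}{d}$ is increasing in $d$. The induction would give a bounded-complexity structure inside each neighborhood, and we would then glue these across vertices. If this gluing fails, we would instead follow the geometric approach of~\cite{2024GraphToGeom}, embedding neighborhoods as caps on a sphere, which is presumably where the $2^{(1/\varepsilon)^C}$ dependence originates.
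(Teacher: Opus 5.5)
The paper does not prove this statement; it imports it from \cite{2024GraphToGeom}. Your outline therefore has to stand on its own, and it does not: step (i) carries the entire content of the theorem, it is left unproved, and the route you sketch for it cannot work.

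The neighbourhood system of a $K_r$-free graph with $\delta(G)\ge(\frac{2r-5}{2r-3}+\varepsilon)n$ need not have VC-dimension bounded by any $d(r,\varepsilon)$. Start from the balanced complete $(r-1)$-partite graph, whose minimum degree exceeds $\frac{2r-5}{2r-3}n$ by $\frac{n}{(r-1)(2r-3)}$. Take a set $S$ of $\lfloor\log_2(\varepsilon n)\rfloor$ vertices in one part, take a vertex $v_T$ in another part for each $T\subseteq S$, and delete the edges $v_Ts$ with $s\in S\setminus T$. For $\varepsilon$ small in terms of $r$, this keeps $K_r$-freeness and the degree condition, and it shatters $S$. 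So no argument using only $K_r$-freeness, the degree bound and the pigeonhole count of Lemma~\ref{lemma:largedegree many common neighbours} can bound the VC-dimension. That lemma only produces large common neighbourhoods of the chosen vertices, never edges among them, so it yields a $K_r$ only when applied to a set that is already (almost) a clique, as everywhere in the paper. Nothing forces shattering witnesses to span such a set, and shattering gives one witness per pattern, not ``many''.

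Maximality must therefore enter essentially. But for maximal graphs, bounded VC-dimension is the theorem in disguise: a blow-up of $H$ has at most $|V(H)|$ distinct neighbourhoods, and conversely your (ii)--(iii) turn a VC bound into the blow-up. What is missing is an argument, using the exact threshold, that the neighbourhoods admit a bounded $\alpha n$-net in symmetric-difference distance. That is where the actual work in {\L}uczak's triangle case and in \cite{2024GraphToGeom} lies. Your induction on $r$ does not supply it. For $r=3$ the neighbourhoods are independent sets, so the base case is {\L}uczak's theorem itself, and the ``gluing'' across vertices is exactly what needs proof.

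The remaining parts are sound with small repairs.
\begin{enumerate}[label=(\alph*)]
\item The reduction to maximal graphs and to the twin quotient is correct.
\item In (iii), record why the pair supplied by Lemma~\ref{lemma:quadraticCliques} on $\{v,w\}\cup Q$ is $\{v,w\}$: every other pair is an edge, and an edge of a $K_r$-free graph has no $K_{r-2}$ in its common neighbourhood.
\item Take the packing scale with $2\varepsilon'<\min\{\alpha,\varepsilon\}$.
\item Treat the case $w=v$ (that is, $u\sim v$) separately, since ``adding $vw$'' is then meaningless. In that case $G[N(u)\cap N(v)]$ is $K_{r-2}$-free and has at least $(\frac{2r-5}{2r-3}+\varepsilon-2\varepsilon')n$ vertices. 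Each of its vertices has at least $|N(u)\cap N(v)|-\frac{2n}{2r-3}$ neighbours inside it, which Tur\'an's theorem forbids (for $r=3$ the set must simply be empty).
\end{enumerate}
With these fixes, (iii) actually shows that any two vertices whose neighbourhoods differ in fewer than $\min\{\alpha,\varepsilon\}n$ vertices are twins. So the second coordinate $\{\rho(u):u\in N(v)\}$ is superfluous, and there are at most $|R|$ twin classes. Finally, the bound $2^{(1/\varepsilon)^{C(r)}}$ requires your VC bound $d$ to depend on $r$ alone, not on $\varepsilon$.
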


The second is K{\"o}nig's theorem. Recall that a \emph{vertex cover} in a
graph is a set of vertices containing at least one endpoint of every edge,
while a \emph{matching} is a set of pairwise disjoint edges. K{\"o}nig's
theorem identifies these two parameters in bipartite graphs.

\begin{theorem}[K{\"o}nig's theorem~\cite{1931Konig}]\label{thm:Konig}
In every bipartite graph, the cardinality of a maximum matching equals the
cardinality of a minimum vertex cover.
\end{theorem}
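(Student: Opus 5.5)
We would prove the two inequalities separately, writing the bipartite graph as $G$ with parts $A$ and $B$. The easy direction is that every vertex cover $K$ satisfies $|K|\ge|M|$ for every matching $M$. Indeed, each edge of $M$ must contain a vertex of $K$, and since the edges of $M$ are pairwise disjoint, these vertices are distinct. Hence the minimum vertex cover is at least the maximum matching.

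For the reverse inequality, fix a maximum matching $M$ and build a cover of size $|M|$ by the alternating-path construction. Let $U\subseteq A$ be the set of vertices of $A$ not covered by $M$. Let $Z$ be the set of vertices reachable from $U$ by an $M$-alternating path, meaning a path starting in $U$ whose edges alternate between $E(G)\setminus M$ and $M$. We set
\[
K:=(A\setminus Z)\cup(B\cap Z),
\]
and the goal is to check that $K$ is a vertex cover with $|K|=|M|$.

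The first point, and the only one needing more than bookkeeping, is that every vertex $b\in B\cap Z$ is covered by $M$. Otherwise the alternating path from $U$ to $b$ would start and end at unmatched vertices, so it would be augmenting. Replacing $M$ by its symmetric difference with this path would then give a matching with one more edge, contradicting the maximality of $M$; this is the easy half of Berge's lemma. It follows that the $M$-partner of each $b\in B\cap Z$ lies in $A\cap Z$, since the path to $b$ extends along the matching edge. Also, every vertex of $A\setminus Z$ is matched, because $U\subseteq Z$. Hence every vertex of $K$ is matched.

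It remains to show that each edge of $M$ contributes exactly one vertex to $K$. Take $ab\in M$ with $a\in A$ and $b\in B$.
\begin{itemize}
\item If $b\in Z$, then $a\in Z$ by the previous paragraph, so only $b$ lies in $K$.
\item If $b\notin Z$, then $a\notin Z$. Indeed, $a$ is matched, so $a\notin U$, and any alternating path from $U$ reaching $a$ must arrive through its matching edge, that is, through $b$. So only $a$ lies in $K$.
\end{itemize}
Together with the fact that every vertex of $K$ is matched, this gives $|K|=|M|$.

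Finally, $K$ covers every edge. Take an edge $ab$ with $a\in A$ and $b\in B$. If $a\notin Z$, then $a\in K$. If $a\in Z$ and $ab\notin M$, then the alternating path reaching $a$ arrives along a matching edge, or $a\in U$ is a trivial path. In either case it extends along the non-matching edge $ab$, so $b\in Z\cap B\subseteq K$. If $a\in Z$ and $ab\in M$, then $a\notin U$, so the alternating path reaching $a$ must arrive through $b$, giving $b\in Z$ and $b\in K$. Therefore $K$ is a vertex cover of size $|M|$, and combined with the easy direction this yields equality.
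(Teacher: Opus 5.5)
Your proof is correct. The paper gives no proof of this statement: it quotes K\"onig's theorem from \cite{1931Konig} as an external tool, so there is no in-paper argument to compare against. What you give is the standard self-contained proof. It has three parts: weak duality; the easy half of Berge's lemma, which shows every vertex of $B\cap Z$ is matched; and the explicit cover $K=(A\setminus Z)\cup(B\cap Z)$, which meets each edge of $M$ exactly once. Each step checks out.

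One point could be stated more carefully. When you ``extend'' an alternating path by the matching edge at $b$, or by a non-matching edge $ab$, the new endpoint might already lie on the path. This is harmless, because that endpoint is then reached by a prefix of the path and so lies in $Z$ anyway. In the first case it cannot even happen. The partner of $b$ would have to be either the start vertex, which is unmatched, or an interior $A$-vertex, whose partner is its predecessor on the path.

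For the paper's purposes, even less than your statement is needed. K\"onig's theorem is used only once, in the proof of the case $s=1$. There it turns the $M_t$-freeness of $\overline{G[A_p,A_q]}$ from Lemma~\ref{lemma:MTTFree} into a vertex cover $K_{pq}\subseteq A_p\cup A_q$ of size at most $t-1$, so only the direction ``minimum cover $\le$ maximum matching'' is used. The authors also only need $|K|$ to be bounded independently of $n$. So the trivial fact that the endpoints of a maximal matching cover every edge, which holds in any graph, would already suffice. It gives a cover of size at most $2(t-1)$ inside $A_p\cup A_q$, at the cost of a factor $2$ in the bound on $|K|$. This is essentially the device the paper already uses when bounding $|F|$ via a maximal matching of heavy edges. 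Your argument buys the sharp equality and removes the dependence on the citation. The maximal-matching shortcut buys a one-line argument that is all the application requires.
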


\section{Proof of Theorem~\ref{thm:hom-clique-fan}}\label{sec:proof-of-main-result}

In this section we prove Theorem~\ref{thm:hom-clique-fan}. Throughout the
section, we assume that $G$ is a maximal $T_{r,k}^{s}$-free graph on $n$
vertices satisfying
    $\delta(G)\ge \left(\frac{2r-5}{2r-3}+\eps\right)n$,
where $\eps>0$ is fixed and $n\ge n_0(\eps,r)$ is sufficiently large. 
If $k=1$ or $s=r$, then $T_{r,k}^s=K_r$, and the result follows from
the clique case. Henceforth we may assume $k\ge 2$ and $s<r$.

We first deal with the easier case $s\ge 2$.

\begin{proof}[Proof of Theorem~\ref{thm:hom-clique-fan} when $s \ge 2$]
    We claim the following.
    \begin{claim}\label{claim:reduce-to-clique}
        $G$ is $K_r$-free.
    \end{claim}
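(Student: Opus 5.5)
We argue by contradiction: suppose $G$ contains a copy of $K_r$ on vertex set $X$. We will use Lemma~\ref{lemma:quadraticCliques} to find an $s$-subset of $X$ whose common neighbourhood contains $\Omega(n^{r-s})$ copies of $K_{r-s}$. From this we will extract $k$ vertex-disjoint copies, which together with the $s$-set give a copy of $T_{r,k}^{s}$.

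\textbf{Applying the counting lemma.} Take $W:=X$, so $\ell=r$, and take $t:=r-s$. Since $k\ge2$ we work with $s<r$, so $t\ge1$; moreover $t\le \ell$, and $\ell+t=2r-s\le 2r-2$ because $s\ge2$. This is exactly where the hypothesis $s\ge 2$ is used. Lemma~\ref{lemma:quadraticCliques} then gives a subset $S\subseteq X$ with $|S|=\ell-t=s$ and
\[
C\bigl(K_{r-s},G[N(S)]\bigr)\ge \alpha n^{r-s},
\]
where $\alpha=\alpha(r,\eps)>0$. Since $S\subseteq X$, the set $S$ spans a clique $K_s$. For every $(r-s)$-clique $Q$ in $N(S)$, the set $S\cup Q$ spans a $K_r$ containing $S$. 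Note that $N(S)$ is disjoint from $S$.

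\textbf{Finding $k$ disjoint extensions.} We choose $Q_1,\dots,Q_k$ greedily. Suppose $Q_1,\dots,Q_{j}$ with $j<k$ have been chosen and are pairwise disjoint. The number of $(r-s)$-subsets of $V(G)$ meeting $Q_1\cup\dots\cup Q_j$ is at most $j(r-s)\,n^{r-s-1}\le kr\,n^{r-s-1}$. For $n\ge n_0$ this is smaller than $\alpha n^{r-s}$, so some $(r-s)$-clique in $N(S)$ avoids all previously chosen $Q_i$, and we take it as $Q_{j+1}$. When $r-s=1$ this step simply says $|N(S)|\ge \alpha n\ge k$.

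The sets $S,Q_1,\dots,Q_k$ are pairwise disjoint, and each $S\cup Q_i$ spans a $K_r$. Hence $S\cup Q_1\cup\dots\cup Q_k$ contains a copy of $T_{r,k}^{s}$, which contradicts $G$ being $T_{r,k}^{s}$-free. Therefore $G$ is $K_r$-free.

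\textbf{Where the difficulty lies.} There is no real obstacle once Lemma~\ref{lemma:quadraticCliques} is available. The only step needing care is checking its numerical condition $\ell+t\le 2r-2$, and this is precisely what fails for $s=1$. We also need $n_0$ large enough in terms of $k$ for the greedy step; this is harmless since $k$ is fixed and the constants in the theorem may depend on $k$.
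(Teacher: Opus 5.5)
Your proposal is correct and matches the paper's proof: apply Lemma~\ref{lemma:quadraticCliques} with $\ell=r$ and $t=r-s$, which is valid because $s\ge 2$, then extract $k$ vertex-disjoint copies of $K_{r-s}$ in $N(S)$ to form $T_{r,k}^{s}$. Your explicit check of the numerical hypotheses and your greedy disjointness count fill in details that the paper leaves implicit.
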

    Assuming Claim~\ref{claim:reduce-to-clique}, we may apply
    Theorem~\ref{thm:KrHomo} to conclude that $G$ is homomorphic to a
    $K_r$-free graph of bounded order, where the bound depends only on
    $\eps$ and $r$. Since every $K_r$-free graph is $T_{r,k}^{s}$-free, this
    gives the desired bounded $T_{r,k}^{s}$-free homomorphic image.

    It remains to prove Claim~\ref{claim:reduce-to-clique}.
    \begin{proof}[Proof of Claim~\ref{claim:reduce-to-clique}]
        Suppose, for a contradiction, that $G$ contains a copy of $K_r$ with
        vertex set $\{v_1,\ldots,v_r\}$. By
        Lemma~\ref{lemma:quadraticCliques}, applied with parameters
        $\ell=r$ and $t=r-s$, there exists a subset
        $W=\{v_1',\ldots,v_s'\}\subseteq \{v_1,\ldots,v_r\}$ spanning a copy
        of $K_s$ such that
            $C(K_{r-s},G[N(W)])\ge \alpha n^{r-s}$.
        Since $n$ is sufficiently large, this guarantees at least $k$
        vertex-disjoint copies of $K_{r-s}$ in $G[N(W)]$. Together with the
        common clique $W$, these copies form a copy of $T_{r,k}^{s}$, a
        contradiction. This proves the claim.
    \end{proof}
\end{proof}

It remains to prove the case $s=1$.
The proof of the case when $r = 3$ is simple, we leave the proofs to the appendix.
In the remaining section, we always assume that $r\ge 4$.


\subsection{Proof of Theorem~\ref{thm:hom-clique-fan} when \(s=1\)}
\label{subsec:s-one}

In this subsection, we prove Theorem~\ref{thm:hom-clique-fan} when
\(s=1\), which is the main technical case. Recall that
\(T_{r,k}:=T_{r,k}^{1}\). The case \(r=3\) is treated separately in
the appendix, so throughout this subsection we assume that \(r\ge4\).
Fix \(\varepsilon>0\), and let \(G\) be a maximal \(T_{r,k}\)-free
graph on \(n\) vertices satisfying
$\delta(G)\ge
\left(\frac{2r-5}{2r-3}+\varepsilon\right)n$.
It suffices to prove that \(G\) is a blow-up of a bounded
\(T_{r,k}\)-free graph. Indeed, any \(T_{r,k}\)-free graph can be
extended on the same vertex set to a maximal \(T_{r,k}\)-free graph,
and a homomorphism of the latter restricts to a homomorphism of the
original graph.

We first give a brief outline. We identify all heavy edges and show
that all their endpoints lie in a bounded set \(F\). After deleting
\(F\), the remaining graph is \(K_r\)-free, so the clique
homomorphism theorem gives a bounded partition into independent sets.
We refine this partition according to the neighborhoods of vertices
in \(F\). The main remaining step is to show that whenever two parts
have at least one edge between them, their bipartite complement has
bounded matching number. A final application of K{\"o}nig's theorem
then turns the partition into a bounded blow-up partition.

\paragraph{Choice of the heavy-edge constant.}
We first choose all constants that will be used in the definition of a
heavy edge. Recall that
\(
\theta_j:=\frac{2j-5}{2j-3}
\)
for \(j\ge 3\). Let \(\rho_0=\rho_0(r,\varepsilon)>0\) be the constant
given by Lemma~\ref{lemma:quadraticCliques}, applied with
\(\ell=r\) and \(t=r-2\). Thus, every \(r\)-set \(X\subseteq V(G)\)
contains a pair \(\{x,y\}\subseteq X\) such that
$C\bigl(K_{r-2},G[N_G(x,y)]\bigr)\ge \rho_0n^{r-2}$.

When \(r\ge 5\), we define one further sequence of constants. Fix a
vertex \(c\in V(G)\), put \(m_c:=|N_G(c)|\), and let
\(G_c:=G[N_G(c)]\). We may assume that
\(\theta_r+\varepsilon<1\), since otherwise no such graph \(G\) exists
for all sufficiently large \(n\). For every \(u\in N_G(c)\),
\[
d_{G_c}(u)
\ge d_G(u)-(n-m_c)
\ge m_c+(\theta_r+\varepsilon-1)n.
\]
Since \(m_c\ge(\theta_r+\varepsilon)n\), it follows that
\[
\frac{d_{G_c}(u)}{m_c}
\ge 2-\frac{1}{\theta_r+\varepsilon}
=\theta_{r-1}+\frac{\varepsilon}{\theta_r(\theta_r+\varepsilon)}
\ge \theta_{r-1}+\frac{\varepsilon}{2}.
\]
Apply Lemma~\ref{lemma:quadraticCliques} inside \(G_c\), with \(r-1\)
in place of \(r\), \(\ell=2r-5\), \(t=1\), and excess
\(\varepsilon/2\). There exists \(\sigma=\sigma(r,\varepsilon)>0\)
such that every \((2r-5)\)-set \(P\subseteq N_G(c)\) contains a vertex
\(u\in P\) satisfying
\[
|N_{G_c}(P\setminus\{u\})|
\ge \sigma m_c
\ge \lambda_1n,
\qquad
\lambda_1:=\sigma\theta_r>0.
\]
For \(j=1,\dots,r-3\), recursively apply
Lemma~\ref{lemma:low level contains high level} with
\[
k_j:=r-j,\qquad t_j:=j,\qquad \alpha:=\lambda_j,
\]
and let \(\lambda_{j+1}>0\) be a corresponding output constant. These
applications are legitimate because
\[
k_j+2t_j=(r-j)+2j=r+j\le 2r-3
\qquad\text{for }1\le j\le r-3.
\]

We now define
\[
\alpha_{\mathrm h}:=
\begin{cases}
\rho_0, & r=4,\\[2mm]
\min\{\rho_0,\lambda_{r-2}\}, & r\ge 5.
\end{cases}
\]
An edge \(xy\in E(G)\) is called \emph{heavy} if
$C\bigl(K_{r-2},G[N_G(x,y)]\bigr)
\ge \alpha_{\mathrm h}n^{r-2}$.
In particular, every copy of \(K_r\) contains a heavy edge, since
\(\alpha_{\mathrm h}\le\rho_0\). All constants above depend only on
\(r\) and \(\varepsilon\) and are chosen before \(F\), the partition
\(A_1,\dots,A_M\), and the matching parameter \(t\) are defined.

\begin{proposition}\label{prop:finite-upset}
Let \(F\) be the union of the endpoints of all heavy edges in \(G\).
Then the following hold.
\begin{enumerate}[label=(\roman*)]
    \item\label{item:|F|-small}
    \(
    |F|\le 4k^2/\alpha_{\mathrm h}.
    \)

    \item\label{item:G-F-free}
    The graph \(G[V(G)\setminus F]\) is \(K_r\)-free.
\end{enumerate}
\end{proposition}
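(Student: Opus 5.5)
Part (ii) is immediate from the definitions. If $G[V(G)\setminus F]$ contained a copy of $K_r$, then Lemma~\ref{lemma:quadraticCliques} (with $\ell=r$, $t=r-2$) applied to its vertex set gives a pair $\{x,y\}$ inside it with $C(K_{r-2},G[N_G(x,y)])\ge\rho_0n^{r-2}\ge\alpha_{\mathrm h}n^{r-2}$. Thus $xy$ is heavy, so $x,y\in F$, a contradiction. So all the work is in (i).

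For (i), we bound the matching number of the heavy-edge graph $G_{\mathrm h}$. Let $M$ be a maximal matching of heavy edges. The endpoints of $M$ form a vertex cover of $G_{\mathrm h}$, but $F$ consists of \emph{all} endpoints of heavy edges, not just a cover. So we need two bounds: on $|M|$, and on the degree of each vertex in $G_{\mathrm h}$. Equivalently, we bound the total number of heavy edges directly.

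The key observation is that $T_{r,k}$ contains $k$ copies of $K_r$ through one vertex. Fix a vertex $x$ and suppose $x$ has heavy neighbours $y_1,\dots,y_m$. Then $x$ lies in at least $\alpha_{\mathrm h}n^{r-2}$ copies of $K_r$ through each edge $xy_i$.

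To get a copy of $T_{r,k}$ centred at $x$, we need $k$ copies of $K_{r-1}$ in $N(x)$ that are pairwise vertex-disjoint. We choose them greedily. Having chosen $j<k$ copies, their union $U$ has size at most $(r-1)(k-1)$. The number of $(r-2)$-cliques in $N(x,y)$ meeting $U\cup\{y\}$ is $O(rk\,n^{r-3})$, which is less than $\alpha_{\mathrm h}n^{r-2}$ for large $n$. Hence, for any single heavy edge $xy$ with $y\notin U$, we can pick an $(r-2)$-clique $Q\subseteq N(x,y)\setminus U$, and $\{y\}\cup Q$ is a new $K_{r-1}$ in $N(x)$ disjoint from $U$.

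This shows that a single heavy edge at $x$ already gives $k$ disjoint copies, and hence a $T_{r,k}$, which is a contradiction. Stated this way, even one heavy edge is impossible, which is suspicious. The correction is that the greedy step does not need distinct $y$'s. Any heavy edge $xy$ lets us build all $k$ copies by using $\{y\}\cup Q$ only once. After that, we use other vertices of the $(r-2)$-cliques: the heavy edge gives $\alpha_{\mathrm h}n^{r-2}$ cliques $Q$ of size $r-2$ in $N(x,y)$, so $N(x)$ contains about $n^{r-2}$ copies of $K_{r-1}$ (namely $Q$ together with any vertex of $N(x,y)$ adjacent to all of $Q$). That extra vertex is not guaranteed to exist, however.

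So the real argument is as follows. From $\alpha_{\mathrm h}n^{r-2}$ copies of $K_{r-2}$ in $N(x,y)$, each yields a $K_r$ containing the edge $xy$. To get many $K_{r-1}$'s in $N(x)$ avoiding $y$, we use the $G$-free structure more carefully.

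I expect this to be the main obstacle, and I would handle it by a double-counting that matches the stated bound $4k^2/\alpha_{\mathrm h}$. Take a maximal collection of heavy edges $x_iy_i$ that are pairwise vertex-disjoint; this is a matching $M$. If $|M|\ge 2k^2/\alpha_{\mathrm h}$, then by averaging, some vertex $z$ lies in the clique families of at least $k$ of these edges. The reason is that the total count of pairs (edge, vertex in one of its $(r-2)$-cliques) is at least $|M|\alpha_{\mathrm h}n$, since each family of $\alpha_{\mathrm h}n^{r-2}$ cliques covers at least $\alpha_{\mathrm h}n$ vertices.

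Then $z$ together with $k$ disjoint cliques $\{x_i,y_i\}\cup(Q_i\setminus\{z\})$ forms a $T_{r,k}$ centred at $z$. The disjointness of the remaining parts is obtained greedily, discarding the $O(n^{r-4})$ cliques through $z$ that hit earlier choices. This argument requires the family of heavy-edge cliques through $z$ to be large, not merely to contain $z$. So I would instead count triples $(x_iy_i,z,Q)$ with $z\in Q$ and pigeonhole onto a $z$ carrying at least $(\alpha_{\mathrm h}/2)n^{r-3}$ cliques for each of $k$ edges. This gives $|M|<2k^2/\alpha_{\mathrm h}$.

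Then $F$ is covered by $V(M)$ together with the heavy neighbours of $V(M)$. The same centred argument, applied with $z$ as the common vertex $x_i$, shows that each vertex has fewer than $k$ heavy neighbours whose clique families are disjoint enough. Combining the two bounds gives $|F|\le 4k^2/\alpha_{\mathrm h}$.
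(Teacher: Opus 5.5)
Part (ii) is correct and is the paper's argument. Your plan for (i) is also the paper's: bound both $\Delta(J)$ and $\nu(J)$ for the heavy-edge graph $J$, then note that every vertex of $F$ lies in $V(\mathcal M)$ or is a heavy neighbour of it, so $|F|\le 2|\mathcal M|(\Delta(J)+1)$. The gap is that you never prove a degree bound. Your first greedy paragraph was essentially right. It only seemed to ``prove too much'' because you dropped your own hypothesis $y\notin U$: each blade $\{x,y_i\}\cup Q_i$ uses up its own heavy neighbour $y_i$, so one heavy edge gives one blade, not $k$. Done properly, suppose $x$ has $k$ distinct heavy neighbours $y_1,\dots,y_k$. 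Choose $Q_i\cong K_{r-2}$ in $G[N_G(x,y_i)]$ one at a time, avoiding $\{x,y_1,\dots,y_k\}$ and the earlier $Q_j$. At each step only $O_{r,k}(n^{r-3})<\alpha_{\mathrm h}n^{r-2}$ cliques are excluded, and the $k$ resulting copies of $K_r$ meet exactly in $x$, giving $T_{r,k}$. So $\Delta(J)\le k-1$ holds unconditionally. Your ``correction'' (that distinct $y$'s are not needed) is false. Your closing claim, that each vertex has fewer than $k$ heavy neighbours ``whose clique families are disjoint enough'', does not bound $\deg_J$. Heaviness alone already makes every such family robust to deleting $O(1)$ vertices, so no qualifier is needed. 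Without an unconditional bound on $\Delta(J)$, the matching bound controls only a vertex cover of $J$, not $F$.

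Second, your constants do not give $4k^2/\alpha_{\mathrm h}$. Your final triple count matches the paper's: for each matching edge $x_iy_i$, the set $C_i$ of vertices $z$ with at least $(\alpha_{\mathrm h}/2)n^{r-3}$ copies of $K_{r-3}$ in $G[N_G(x_i,y_i)\cap N_G(z)]$ has size at least $\alpha_{\mathrm h}n/2$ (using $r\ge 4$). The pigeonhole then already works once $|\mathcal M|>2k/\alpha_{\mathrm h}$, since then $\sum_i|C_i|>kn$. So $\nu(J)\le 2k/\alpha_{\mathrm h}$, not $2k^2/\alpha_{\mathrm h}$. Your bound together with $\Delta(J)\le k-1$ gives only $|F|\le 4k^3/\alpha_{\mathrm h}$. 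The correct one gives $|F|\le 2\cdot\frac{2k}{\alpha_{\mathrm h}}\cdot k=\frac{4k^2}{\alpha_{\mathrm h}}$. Any $O_{r,k,\varepsilon}(1)$ bound would suffice later in the paper, but (i) asserts this specific constant. Finally, in that greedy step the chosen copies of $K_{r-3}$ must avoid all $2k$ vertices $x_{i_j},y_{i_j}$, not only the earlier choices.
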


\begin{proof}[Proof of Proposition~\ref{prop:finite-upset}]
Let \(J\) be the spanning subgraph of \(G\) whose edge set consists
of all heavy edges. We first claim that
\begin{equation}\label{eq:heavy-max-degree}
\Delta(J)\le k-1.
\end{equation}
Otherwise, some vertex \(v\) has \(k\) distinct neighbors
\(u_1,\dots,u_k\) in \(J\). For every \(i\in[k]\), the graph
\(G[N_G(v,u_i)]\) contains at least
\(\alpha_{\mathrm h}n^{r-2}\) copies of \(K_{r-2}\). Since \(r\) and
\(k\) are fixed, we may greedily choose copies
\[
R_i\cong K_{r-2}\subseteq G[N_G(v,u_i)]
\qquad (i\in[k])
\]
such that the sets \(V(R_i)\), \(i\in[k]\), are pairwise disjoint
and avoid \(\{v,u_1,\dots,u_k\}\). Indeed, at each step only
\(O_{r,k}(1)\) vertices are forbidden, and the number of
\((r-2)\)-cliques meeting them is \(O_{r,k}(n^{r-3})\). The \(k\)
copies of \(K_r\) on
\(\{v,u_i\}\cup V(R_i)\), \(i\in[k]\), intersect exactly in \(v\),
and hence form a copy of \(T_{r,k}\), a contradiction. This proves
\eqref{eq:heavy-max-degree}.

We next bound the matching number of \(J\). Suppose that
$\{x_iy_i:i\in[m]\}$
is a matching in \(J\), and set \(U_i:=N_G(x_i,y_i)\). 
For \(u\in U_i\), let
$f_i(u):=
C\bigl(K_{r-3},G[U_i\cap N_G(u)]\bigr)$.
Every copy of \(K_{r-2}\) in \(G[U_i]\) is counted once at each of
its \(r-2\) vertices, so
\[
\sum_{u\in U_i}f_i(u)
=(r-2)C(K_{r-2},G[U_i])
\ge(r-2)\alpha_{\mathrm h}n^{r-2}.
\]
Define
$C_i:=
\left\{u\in U_i:
f_i(u)\ge\frac{\alpha_{\mathrm h}}2n^{r-3}\right\}$.
Since \(f_i(u)\le n^{r-3}\), the preceding inequality implies
\begin{equation}\label{eq:Ci-lower}
|C_i|\ge\frac{\alpha_{\mathrm h}}2n.
\end{equation}
Indeed, if \(|C_i|<\alpha_{\mathrm h}n/2\), then
\[
\sum_{u\in U_i}f_i(u)
<\frac{\alpha_{\mathrm h}}2n\cdot n^{r-3}
+n\cdot\frac{\alpha_{\mathrm h}}2n^{r-3}
=\alpha_{\mathrm h}n^{r-2},
\]
contradicting \(r\ge4\).

If \(m>2k/\alpha_{\mathrm h}\), then
\eqref{eq:Ci-lower} and the pigeonhole principle give a vertex
\(w\in V(G)\) belonging to at least \(k\) of the sets \(C_i\).
Choose distinct indices \(i_1,\dots,i_k\) with
\(w\in C_{i_j}\). For every $j\in[k]$, the graph
$G[U_{i_j}\cap N_G(w)]$ contains at least
$(\alpha_{\mathrm h}/2)n^{r-3}$ copies of $K_{r-3}$.
Since $r$ and $k$ are fixed, we may greedily choose pairwise
vertex-disjoint copies
$L_j\cong K_{r-3}\subseteq G[U_{i_j}\cap N_G(w)]$
for all $j\in[k]$
such that every $L_j$ avoids $w$ and the $2k$ vertices
$x_{i_1},y_{i_1},\ldots,x_{i_k},y_{i_k}$.
Indeed, at each step only $O_{r,k}(1)$ vertices are forbidden,
whereas the number of copies of $K_{r-3}$ meeting those vertices
is $O_{r,k}(n^{r-4})$.
Then the $k$ copies of $K_r$ on
$\{w,x_{i_j},y_{i_j}\}\cup V(L_j)$
where $j\in[k]$
intersect exactly in $w$, again giving a copy of $T_{r,k}$.
Consequently,
\begin{equation}\label{eq:heavy-matching}
\nu(J)\le\frac{2k}{\alpha_{\mathrm h}}.
\end{equation}

Let \(\mathcal M\) be a maximal matching in \(J\). Every vertex of
positive degree in \(J\) either belongs to \(V(\mathcal M)\) or is
adjacent to a vertex of \(V(\mathcal M)\). Hence
\[
|F|
\le 2|\mathcal M|\bigl(\Delta(J)+1\bigr)
\le
2\cdot\frac{2k}{\alpha_{\mathrm h}}\cdot k
=\frac{4k^2}{\alpha_{\mathrm h}},
\]
proving \ref{item:|F|-small}.

Finally, every copy of \(K_r\) in \(G\) contains a heavy edge, and
both endpoints of every heavy edge belong to \(F\). Thus no copy of
\(K_r\) is contained in \(V(G)\setminus F\), proving
\ref{item:G-F-free}.
\end{proof}

We next perform the following preprocessing step.

\begin{proposition}\label{prop:preprocess}
There exists a partition
\(
V(G)\setminus F=A_1\sqcup\cdots\sqcup A_M
\)
such that the following hold.
\begin{enumerate}[label=(\roman*)]
    \item\label{item:independent-sets}
    Each \(A_i\) is an independent set in \(G\).

    \item\label{item:0-1-partition}
    For every \(i\in[M]\) and every \(v\in F\), either
    \(A_i\subseteq N_G(v)\) or
    \(A_i\cap N_G(v)=\emptyset\).

    \item\label{item:number-of-parts}
    \(M\le 2^{(1/\varepsilon)^m}\) for some constant
    \(m=m(r,k)>0\).

    \item\label{item:total-Kr-free}
    Let \(\Gamma\) be the auxiliary graph with vertex set \([M]\),
    where \(ij\in E(\Gamma)\) if and only if
    \(G[A_i,A_j]\) is non-empty. Then \(\Gamma\) is \(K_r\)-free.

    \item\label{item:neighbor-Kr-1-free}
    For each \(v\in F\), let \(\Gamma_v\) be the auxiliary graph
    with
    \[
    V(\Gamma_v)=\{i\in[M]:A_i\subseteq N_G(v)\},
    \]
    where \(ij\in E(\Gamma_v)\) if and only if
    \(G[A_i,A_j]\) is non-empty. Then \(\Gamma_v\) is
    \(K_{r-1}\)-free.
\end{enumerate}
\end{proposition}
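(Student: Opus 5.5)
Since $G[V(G)\setminus F]$ is $K_r$-free by Proposition~\ref{prop:finite-upset}\ref{item:G-F-free}, the plan is to apply Theorem~\ref{thm:KrHomo} to $G':=G-F$ and then refine the resulting partition by the neighbourhoods of the bounded set $F$. First, $G'$ has large minimum degree: $|F|\le 4k^2/\alpha_{\mathrm h}$ is a constant, so $\delta(G')\ge (\theta_r+\varepsilon)n-|F|\ge(\theta_r+\varepsilon/2)|V(G')|$ for $n$ large. Theorem~\ref{thm:KrHomo} then gives a homomorphism $\phi:G'\to H$ with $H$ $K_r$-free and $|V(H)|\le 2^{(2/\varepsilon)^{C}}$. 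The fibres $\phi^{-1}(h)$ are independent in $G'$, hence in $G$. Next I refine each fibre by the trace vector $(\mathbf 1[u\in N_G(v)])_{v\in F}$. This produces at most $|V(H)|\cdot 2^{|F|}$ parts $A_1,\dots,A_M$, all independent, giving \ref{item:independent-sets}, and \ref{item:0-1-partition} holds by construction. Since $|F|$ depends only on $r,k,\varepsilon$ (through $\alpha_{\mathrm h}$, which is polynomial-type in $\varepsilon$ via the lemma constants), $M\le 2^{(1/\varepsilon)^m}$ for a suitable $m=m(r,k)$, giving \ref{item:number-of-parts}. Tracking that $1/\alpha_{\mathrm h}$ is at most $(1/\varepsilon)^{O(1)}$ is routine bookkeeping.

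For \ref{item:total-Kr-free}, each part $A_i$ lies inside a single fibre $\phi^{-1}(h_i)$. If $G[A_i,A_j]$ is nonempty, then $h_ih_j\in E(H)$, and in particular $h_i\neq h_j$ because fibres are independent. A $K_r$ in $\Gamma$ on indices $i_1,\dots,i_r$ would therefore map to $r$ pairwise adjacent, hence distinct, vertices $h_{i_1},\dots,h_{i_r}$ of $H$, contradicting that $H$ is $K_r$-free.

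The main obstacle is \ref{item:neighbor-Kr-1-free}, because $\Gamma$-cliques give only one edge per pair of parts, not a genuine clique in $G$. Since $\Gamma_v\subseteq\Gamma$ and $\Gamma$ is $K_r$-free, a $K_{r-1}$ in $\Gamma_v$ will not yield a contradiction via $H$ directly. My first attempt would be to refine further inside each $N_G(v)$, $v\in F$. For this, apply Theorem~\ref{thm:KrHomo} with $r-1$ in place of $r$ to $G_v:=G[N_G(v)\setminus F]$, which has relative minimum degree at least $\theta_{r-1}+\varepsilon/3$ by the computation made for $G_c$. This needs $G_v$ to be $K_{r-1}$-free. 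To check it, suppose $G_v$ contains a $K_{r-1}$ with vertex set $Q$. Then $\{v\}\cup Q$ is a $K_r$, so it contains a heavy edge. Since $Q\cap F=\emptyset$ and both endpoints of a heavy edge lie in $F$, the heavy edge must be $vq$ with $q\in Q$, which forces $q\in F$, a contradiction. So $G_v$ is $K_{r-1}$-free, and we obtain a homomorphism $\psi_v:G_v\to H_v$ with $H_v$ $K_{r-1}$-free.

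I then intersect all the partitions: the fibres of $\phi$, the fibres of each $\psi_v$ on $N_G(v)\setminus F$, and the traces on $F$. The number of parts is still bounded by $|V(H)|\prod_{v\in F}(|V(H_v)|+1)\cdot 2^{|F|}$, which is of the required form, and \ref{item:independent-sets}--\ref{item:total-Kr-free} are preserved under refinement. For \ref{item:neighbor-Kr-1-free}, the parts of $\Gamma_v$ lie in $N_G(v)\setminus F$, and each lies in a single $\psi_v$-fibre. A $K_{r-1}$ in $\Gamma_v$ would then give $r-1$ pairwise adjacent vertices of $H_v$, which is impossible. I expect the delicate points to be checking the minimum-degree hypothesis for $G_v$ after deleting $F$, and keeping the bound on $M$ in the stated doubly exponential form.
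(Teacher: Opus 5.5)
Your proposal is correct and follows essentially the same route as the paper: apply Theorem~\ref{thm:KrHomo} to $G-F$ to get (iv), apply it with $r-1$ in place of $r$ to each $K_{r-1}$-free graph $G[N_G(v)\setminus F]$ to get (v), and take the common refinement, so that $M$ is at most $q\prod_{v\in F}(q_v+1)$ up to a harmless factor. Your separate refinement by traces on $F$ is redundant but harmless: the paper instead extends each $\psi_v$ by an extra symbol $0$ off $N_G(v)$, and this already yields (ii).
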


\begin{proof}[Proof of Proposition~\ref{prop:preprocess}]
Let \(G_1:=G[V(G)\setminus F]\). By
Proposition~\ref{prop:finite-upset}\ref{item:G-F-free}, the graph
\(G_1\) is \(K_r\)-free. Moreover, by
Proposition~\ref{prop:finite-upset}\ref{item:|F|-small}, for all
sufficiently large \(n\),
\[
\delta(G_1)
\ge(\theta_r+\varepsilon)n-|F|
\ge\left(\theta_r+\frac{\varepsilon}{2}\right)|V(G_1)|.
\]
By Theorem~\ref{thm:KrHomo}, there exist a \(K_r\)-free graph \(Q\)
and a homomorphism \(\varphi:V(G_1)\to V(Q)\) such that
\begin{equation}\label{eq:q-bound}
q:=|V(Q)|\le 2^{(1/\varepsilon)^{C_1}}
\end{equation}
for some \(C_1=C_1(r)>0\). In particular, for every \(z\in V(Q)\),
the preimage \(\varphi^{-1}(z)\) is an independent set in \(G_1\).

Fix \(v\in F\), and set \(m_v:=|N_{G_1}(v)|\). We claim that
\(G_1[N_{G_1}(v)]\) is \(K_{r-1}\)-free. Otherwise, a copy of
\(K_{r-1}\) in \(N_{G_1}(v)\), together with \(v\), would form a
copy of \(K_r\) in \(G\). Every copy of \(K_r\) contains a heavy
edge, whose two endpoints lie in \(F\), but this copy of \(K_r\)
contains only one vertex of \(F\), namely \(v\), a contradiction.

For every \(u\in N_{G_1}(v)\), we have
\begin{align}\label{eq:deg-in-neigh}
d_{G_1[N_{G_1}(v)]}(u)
\ge d_G(u)-(n-m_v)
\ge m_v+(\theta_r+\varepsilon-1)n.
\end{align}
Also
\(
m_v\ge d_G(v)-|F|
\ge(\theta_r+\varepsilon/2)n
\)
for all sufficiently large \(n\). It follows from
\eqref{eq:deg-in-neigh} that
\begin{equation}\label{eq:mindeg-neigh}
\delta\bigl(G_1[N_{G_1}(v)]\bigr)
\ge
\left(\theta_{r-1}+\frac{\varepsilon}{2}\right)m_v.
\end{equation}
Indeed,
\[
1-\frac{1-\theta_r-\varepsilon}{\theta_r+\varepsilon/2}
\ge\theta_{r-1}+\frac{\varepsilon}{2}.
\]

Applying Theorem~\ref{thm:KrHomo} to
\(G_1[N_{G_1}(v)]\), with \(r-1\) in place of \(r\) and
\(\varepsilon/2\) as the excess parameter, we obtain a
\(K_{r-1}\)-free graph \(Q^v\) and a homomorphism
$\psi_v:N_{G_1}(v)\to V(Q^v)$
such that
\begin{equation}\label{eq:qv-bound}
q_v:=|V(Q^v)|\le 2^{(1/\varepsilon)^{C_2}}
\end{equation}
for some \(C_2=C_2(r)>0\).

Extend \(\psi_v\) to \(V(G_1)\) by introducing an additional symbol
\(0\notin V(Q^v)\) and setting \(\psi_v(x)=0\) whenever
\(x\notin N_{G_1}(v)\). Define an equivalence relation on
\(V(G_1)\) by
\[
x\sim y
\quad\Longleftrightarrow\quad
\varphi(x)=\varphi(y)
\ \text{and}\
\psi_v(x)=\psi_v(y)\text{ for every }v\in F.
\]
Let \(A_1,\dots,A_M\) be the equivalence classes.

Since \(\varphi\) is constant on every \(A_i\), each \(A_i\) is an
independent set, proving \ref{item:independent-sets}. For every
\(v\in F\), the function \(\psi_v\) is constant on each \(A_i\).
If its common value is \(0\), then
\(A_i\cap N_G(v)=\emptyset\); otherwise
\(A_i\subseteq N_G(v)\). This proves
\ref{item:0-1-partition}.

Suppose that \(\{i_1,\dots,i_r\}\) spans a copy of \(K_r\) in
\(\Gamma\). Let \(z_a\) be the common \(\varphi\)-image of \(A_{i_a}\).
For distinct \(a,b\in[r]\), the pair
\(G[A_{i_a},A_{i_b}]\) contains an edge, and hence
\(z_az_b\in E(Q)\). Thus \(Q\) contains a copy of \(K_r\), a
contradiction. This proves \ref{item:total-Kr-free}.

Similarly, fix \(v\in F\) and suppose that
\(\{i_1,\dots,i_{r-1}\}\) spans a copy of \(K_{r-1}\) in
\(\Gamma_v\). Let \(z_a^v\) be the common \(\psi_v\)-image of
\(A_{i_a}\). The non-empty pairs between these classes imply that
\(z_a^vz_b^v\in E(Q^v)\) for all distinct \(a,b\), so \(Q^v\)
contains a copy of \(K_{r-1}\), a contradiction. This proves
\ref{item:neighbor-Kr-1-free}.

Finally, every equivalence class is determined by one
\(\varphi\)-value and, for each \(v\in F\), one of the \(q_v+1\)
possible \(\psi_v\)-values. Hence
\begin{equation}\label{eq:M-bound}
M\le q\prod_{v\in F}(q_v+1)
\le 2^{(1/\varepsilon)^m}
\end{equation}
for some \(m=m(r,k)>0\). Here we use the quantitative bound
\(\alpha_{\mathrm h}^{-1}\le(1/\varepsilon)^{C_0}\), for some
\(C_0=C_0(r)>0\), which follows from the explicit constants in
Lemmas~\ref{lemma:largedegree many common neighbours},
\ref{lemma:low level contains high level}, and
\ref{lemma:quadraticCliques}. Together with
Proposition~\ref{prop:finite-upset}\ref{item:|F|-small}, this gives
\(|F|\le(1/\varepsilon)^{C_3}\) for some \(C_3=C_3(r,k)>0\).
The bounds \eqref{eq:q-bound} and \eqref{eq:qv-bound} now imply
\eqref{eq:M-bound}.
\end{proof}

Let
\begin{equation}\label{eq:t-def}
t:=10(M+|F|)^{r-2}(kr)^{kr+1},
\end{equation}
and let \(M_t\) denote a matching of size \(t\). For disjoint sets
\(X,Y\subseteq V(G)\), write
\(\overline{G[X,Y]}:=K_{X,Y}\setminus G[X,Y]\).

\begin{lemma}\label{lemma:MTTFree}
For any distinct \(p,q\in[M]\), if \(G[A_p,A_q]\) is non-empty,
then \(\overline{G[A_p,A_q]}\) is \(M_t\)-free.
\end{lemma}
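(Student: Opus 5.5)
The plan is to argue by contradiction, using the maximality of $G$. Suppose $G[A_p,A_q]$ contains an edge $ab$ with $a\in A_p$, $b\in A_q$, and that $\overline{G[A_p,A_q]}$ contains a matching $\{x_iy_i:i\in[t]\}$ with $x_i\in A_p$, $y_i\in A_q$. Since $G$ is maximal $T_{r,k}$-free, each graph $G+x_iy_i$ contains a copy $\mathcal T_i$ of $T_{r,k}$ that uses the edge $x_iy_i$. The goal is to extract from these $t$ copies a copy of $T_{r,k}$ already present in $G$, with the non-edges replaced by genuine edges.

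\textbf{Step 1 (encoding types).} Record for each $\mathcal T_i$ its vertices $z$ together with their roles in $T_{r,k}$: the centre, the index of the clique containing $z$, and whether $z$ is $x_i$ or $y_i$. Attach to each vertex a label, namely the vertex itself if $z\in F$, and the index $j$ with $z\in A_j$ otherwise. The number of such labelled types is at most roughly $(M+|F|)^{kr}(kr)^{kr}$. The choice of $t$ in \eqref{eq:t-def} is meant to let pigeonhole give many indices $i$ sharing a type; if the count exceeds $t$, I would first use Proposition~\ref{prop:finite-upset} to reduce to fewer than $kr$ vertices outside the clique through $x_iy_i$. Since the matching edges are disjoint, a further greedy thinning makes the non-$F$ vertices of the chosen copies pairwise disjoint apart from shared $F$-vertices.

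\textbf{Step 2 (locating the new clique).} Let $K^{(i)}$ be the copy of $K_r$ in $\mathcal T_i$ through $x_iy_i$; its other $r-2$ vertices lie in $N_G(x_i,y_i)$. By Proposition~\ref{prop:preprocess}\ref{item:total-Kr-free}, not all labels of $K^{(i)}$ can be parts: $pq\in E(\Gamma)$, and every other pair inside $K^{(i)}$ is an edge of $G$, so an all-part $K^{(i)}$ would give a $K_r$ in $\Gamma$. Hence some $v\in F$ lies in $K^{(i)}$, and this $v$ is common to all $i$ of the chosen type. If $K^{(i)}$ has exactly one $F$-vertex $v$, then $A_p,A_q\subseteq N_G(v)$ by Proposition~\ref{prop:preprocess}\ref{item:0-1-partition}. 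The remaining part labels then form, with $p,q$, a $K_{r-1}$ in $\Gamma_v$, contradicting Proposition~\ref{prop:preprocess}\ref{item:neighbor-Kr-1-free}. Therefore $K^{(i)}$ contains at least two $F$-vertices.

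\textbf{Step 3 (rebuilding inside $G$, the main obstacle).} Now one must swap the pair $(x_i,y_i)$ for the genuine edge $ab$, or recombine the many same-type copies. Adjacencies to $F$-vertices transfer freely, by Proposition~\ref{prop:preprocess}\ref{item:0-1-partition}. The difficulty is that adjacencies between non-$F$ vertices of $K^{(i)}$ and $x_i$ need not hold for $a$. I would first try to use that many disjoint copies give, for the fixed set $S=V(K^{(i)})\cap F$, many disjoint cliques in the common neighbourhood of $S$ together with vertices of $A_p$ and $A_q$. Combining $ab$ with $S$ and the common neighbourhoods, and choosing the remaining $k-1$ cliques of $T_{r,k}$ from the other disjoint same-type copies (which avoid $a,b$ after discarding $O(kr)$ indices), would assemble a copy of $T_{r,k}$ in $G$. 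If the non-$F$ vertices of $K^{(i)}$ cannot be transferred, I would instead aim to show that $x_i$ or $y_i$ forms heavy edges with $S$, which would force $x_i\in F$, a contradiction. This transfer step is where I expect the real work to lie.
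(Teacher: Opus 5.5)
Your Steps 1--2 match the opening of the paper's proof. The paper also pigeonholes on locations (a vertex of $F$ or a part $A_j$), and your Step 2 is exactly its Claim~\ref{claim:two-singletons}, proved with the same use of $\Gamma$ and $\Gamma_v$. Two corrections there:
\begin{itemize}
\item Record only the locations of the $r-2$ other vertices of the incomplete blade and the core position. Typing the whole copy of $T_{r,k}$ needs about $(M+|F|)^{kr}$ classes, which the choice of $t$ does not cover.
\item Drop the ``greedy thinning.'' It is unjustified, since one non-$F$ vertex may lie in every copy; the paper simply allows such shared vertices.
\end{itemize}
The genuine gap is Step 3. It is the entire content of the lemma, and you leave it as a list of hopes. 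Swapping $x_iy_i$ for $ab$ fails for the reason you give. Your fallback of making $x_i$ or $y_i$ heavy does not close either: Lemma~\ref{lemma:quadraticCliques} applied to $V(T_i)$ does produce a heavy pair, but it can just be a pair of $F$-vertices of the blade, which contradicts nothing.

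The missing idea is that you need $ab$ only to get $pq\in E(\Gamma)$, and you never need to combine pieces of different copies. Keep the $k-1$ complete blades of a single $H_i^-=\widehat H_i-x_iy_i$ and repair only its incomplete blade $T_i$. Suppose an $(r-1)$-clique $S\subseteq V(T_i)$ containing the core (e.g.\ $V(T_i)$ minus a non-core endpoint of the missing edge) has more than $|V(H_i^-)\cup F|$ common neighbours. Then $S$ plus one of them replaces $T_i$ and gives $T_{r,k}$ in $G$ (Claim~\ref{claim:Produce}).

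To force this, take two blades $T_1,T_2$ of the same type. By your Step 2 they share $a+2\ge 2$ vertices, so their union has $2r-a-2$ vertices. Now apply Lemma~\ref{lemma:quadraticCliques} with $t=1$: to the union itself; to the union minus $x_1$ when $a=0$; or inside $G[N(c)]$, with $r-1$ in place of $r$, when the core $c$ is shared. This gives a vertex $u$ whose removal leaves linearly many common neighbours. If $u$ is private to one blade, the other blade's $S$ works. If $u$ is a shared non-core vertex, replace it by two fresh non-adjacent vertices from a common part and induct on $a$.

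The base cases need two ingredients you do not have. First, every $K_{r-2}$ in $N(x_i,y_i)$ meets $F$ (Claim~\ref{claim:FewKr-2}), so the heavy pair is never $\{x_i,y_i\}$. It is therefore forced to be $w_1w_2$, or $w_1c$ in the case $a=1$ with shared core, after amplifying via Lemma~\ref{lemma:low level contains high level}. Second, a heavy edge through the core lets you replace $T_i$ directly. Otherwise, linearly many common neighbours of $V(T_2)\setminus\{w_1\}$ outside $F$ would give a $K_{r-1}$ in $\Gamma_{w_2}$.
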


We first complete the proof of Theorem~\ref{thm:hom-clique-fan}
assuming Lemma~\ref{lemma:MTTFree}.

\begin{proof}[Proof of Theorem~\ref{thm:hom-clique-fan} when \(s=1\)]
For every pair \(1\le p<q\le M\) for which
\(G[A_p,A_q]\) is non-empty, Lemma~\ref{lemma:MTTFree} and
K{\"o}nig's theorem give a vertex cover
$K_{pq}\subseteq A_p\cup A_q$
of \(\overline{G[A_p,A_q]}\) with
\(|K_{pq}|\le t-1\). 
Let
\begin{equation}\label{eq:K-bound}
K:=\bigcup_{\substack{1\le p<q\le M\\
G[A_p,A_q]\ne\emptyset}}K_{pq}\Rightarrow
|K|\le(t-1)\binom M2.
\end{equation}
For \(i\in[M]\), set \(B_i:=A_i\setminus K\). If
\(G[A_i,A_j]\) is empty, then so is \(G[B_i,B_j]\); otherwise
\(G[B_i,B_j]\) is complete by the definition of \(K_{ij}\).

Write \(K=\{u_1,\dots,u_{|K|}\}\). For \(i\in[M]\) and
\(J\subseteq[|K|]\), define
\[
B_i^J:=
\{x\in B_i:N_G(x)\cap K=\{u_j:j\in J\}\}.
\]
Discard the empty classes. Every \(B_i^J\) is independent. Between
two such classes all pairs are either complete or empty, and every
class is either complete or anticomplete to each singleton in
\(F\cup K\): for \(K\) this holds by definition, and for \(F\) it
follows from Proposition~\ref{prop:preprocess}
\ref{item:0-1-partition}. Therefore \(G\) is a blow-up of the
quotient graph \(L\) with vertex set
\[
V(L):=
F\sqcup K\sqcup
\{b_i^J:i\in[M],\ J\subseteq[|K|],\ B_i^J\ne\emptyset\},
\]
where adjacency is inherited from \(G\). Moreover,
\[
|V(L)|\le |F|+|K|+M2^{|K|},
\]
which is bounded in terms of \(r,k,\varepsilon\).

The graph \(L\) is \(T_{r,k}\)-free. Indeed, if \(L\) contained a
copy of \(T_{r,k}\), choosing one vertex from each corresponding
non-empty blow-up class would give a copy of \(T_{r,k}\) in \(G\),
a contradiction. Thus \(G\) is a blow-up of a bounded
\(T_{r,k}\)-free graph, completing the proof.
\end{proof}

\subsection{Proof of Lemma~\ref{lemma:MTTFree}}
In this subsection, we prove Lemma~\ref{lemma:MTTFree}, thereby
completing the proof.

\begin{proof}[Proof of Lemma~\ref{lemma:MTTFree}]
Fix distinct \(p,q\in[M]\) such that \(G[A_p,A_q]\) is non-empty.
Suppose for a contradiction that the bipartite complement
\(\overline{G[A_p,A_q]}\) contains a copy of \(M_t\), with
\[
\{a_1,\dots,a_t\}\subseteq A_p,
\qquad
\{b_1,\dots,b_t\}\subseteq A_q,
\]
where \(a_ib_i\notin E(G)\) for every \(i\in[t]\), and all these
\(2t\) vertices are distinct.

Write \(H:=T_{r,k}\). Since \(G\) is maximal \(H\)-free,
\(G+a_ib_i\) contains a copy \(\widehat H_i\) of \(H\) using the added
edge \(a_ib_i\). Let
\(
H_i^-:=\widehat H_i-a_ib_i\subseteq G.
\)
The edge \(a_ib_i\) lies in a unique blade of \(\widehat H_i\); let
\(T_i\) denote the corresponding incomplete blade in \(H_i^-\). Thus
$V(T_i)=\{a_i,b_i,z_1^i,\dots,z_{r-2}^i\}$,
and \(a_ib_i\) is the unique missing edge in \(G[V(T_i)]\). Let
\(c_i\in V(T_i)\) be the core of \(H_i^-\).

For \(h\in[r-2]\), define the \emph{location} of \(z_h^i\) to be the
vertex \(z_h^i\) itself if \(z_h^i\in F\), and the unique part \(A_s\)
containing \(z_h^i\) otherwise. There are at most \(M+|F|\) possible
locations. By the pigeonhole principle, there is a set
\(L\subseteq[t]\) such that
\begin{equation}\label{eq:L-size}
|L|\ge\frac{t}{(M+|F|)^{r-2}}
\end{equation}
and, for every \(h\in[r-2]\), the vertices \(z_h^i\), \(i\in L\),
have the same location. There are only \(r\) possible positions of
the core in \(T_i\), so there is \(L'\subseteq L\) such that
\begin{equation}\label{eq:Lprime-size}
|L'|\ge\frac{|L|}{r}
\end{equation}
and the core occupies the same position in \(T_i\) for every
\(i\in L'\). Relabel \(L'=\{1,\dots,\ell\}\). Then
\begin{equation}\label{eq:ell-lower}
\ell\ge\frac{t}{r(M+|F|)^{r-2}}\ge 2,
\end{equation}
where the last inequality follows from the definition of \(t\).
For \(h\in[r-2]\), set
\(
Z_h:=\{z_h^i:i\in[\ell]\}.
\)
Each \(Z_h\) is either contained in one part \(A_{s(h)}\), or is a
singleton \(\{w_h\}\) with \(w_h\in F\).

\begin{claim}\label{claim:two-singletons}
At least two of the sets \(Z_h\), \(h\in[r-2]\), are singletons.
\end{claim}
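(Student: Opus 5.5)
The plan is to argue by contradiction, using only the auxiliary graphs \(\Gamma\) and \(\Gamma_v\) from Proposition~\ref{prop:preprocess}. So suppose at most one of the sets \(Z_h\), \(h\in[r-2]\), is a singleton. Fix a single index, say \(i=1\), which exists since \(\ell\ge 2\). Recall that \(G[V(T_1)]\) is a copy of \(K_r\) with only the edge \(a_1b_1\) missing. Hence every \(z_h^1\) is adjacent in \(G\) to \(a_1\), to \(b_1\), and to every \(z_{h'}^1\) with \(h'\neq h\).

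\textbf{Step 1: the relevant parts are distinct.} Let \(S\subseteq[r-2]\) be the set of indices \(h\) for which \(Z_h\) is not a singleton, so \(Z_h\subseteq A_{s(h)}\). By our assumption, \(|S|\ge r-3\). The parts \(A_p,A_q,A_{s(h)}\) (\(h\in S\)) are pairwise distinct. Each part is an independent set by Proposition~\ref{prop:preprocess}\ref{item:independent-sets}. Moreover, \(z_h^1\) is adjacent to \(a_1\in A_p\), to \(b_1\in A_q\), and to \(z_{h'}^1\) for \(h'\neq h\). Two adjacent vertices cannot share a part, so \(s(h)\notin\{p,q\}\) and \(s(h)\neq s(h')\) for distinct \(h,h'\in S\). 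The same adjacencies show that \(G[X,Y]\) is non-empty for every pair \(X,Y\) among these parts, except possibly the pair \(\{A_p,A_q\}\). That pair is non-empty by the hypothesis of Lemma~\ref{lemma:MTTFree}. So the indices \(\{p,q\}\cup\{s(h):h\in S\}\) span a clique in \(\Gamma\) of order \(|S|+2\).

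\textbf{Step 2: no singleton.} If \(|S|=r-2\), this is a \(K_r\) in \(\Gamma\), contradicting Proposition~\ref{prop:preprocess}\ref{item:total-Kr-free}.

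\textbf{Step 3: exactly one singleton.} Suppose instead \(|S|=r-3\), with the remaining set \(Z_{h_0}=\{w\}\) and \(w\in F\). Then \(w=z_{h_0}^1\) is adjacent to \(a_1\), to \(b_1\), and to every \(z_h^1\) with \(h\in S\). So each of the parts \(A_p,A_q,A_{s(h)}\) (\(h\in S\)) meets \(N_G(w)\). By Proposition~\ref{prop:preprocess}\ref{item:0-1-partition}, each of them is then contained in \(N_G(w)\), so all \(r-1\) of these indices lie in \(V(\Gamma_w)\). Since \(\Gamma_w\) is an induced subgraph of \(\Gamma\) with the same adjacency rule, Step 1 shows they span a \(K_{r-1}\) in \(\Gamma_w\). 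This contradicts Proposition~\ref{prop:preprocess}\ref{item:neighbor-Kr-1-free}.

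Both cases give a contradiction, so at least two of the \(Z_h\) are singletons. The only delicate points are the distinctness of the parts in Step 1, which follows from their independence, and the use of the hypothesis that \(G[A_p,A_q]\) is non-empty to supply the single missing adjacency \(a_1b_1\) at the level of \(\Gamma\). The common position of the core and the choice of \(L'\) are not needed for this claim.
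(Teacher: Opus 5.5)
Your argument is the paper's own: with no singleton, $A_p,A_q,A_{s(1)},\dots,A_{s(r-2)}$ span a $K_r$ in $\Gamma$, and with exactly one singleton $\{w\}$, $w\in F$, the other $r-1$ parts span a $K_{r-1}$ in $\Gamma_w$. Your distinctness and adjacency checks for this are correct.

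There is one small unjustified step. In Step 3 you assume $w\in F$, but "singleton" in the claim means $|Z_{h_0}|=1$. This can also happen with $w\notin F$, when every $z_{h_0}^i$ is the same vertex of some part. The paper treats this subcase separately. Your Step 1--2 argument covers it: take $A_{s(h_0)}$ to be the part containing $w$, and you get a $K_r$ in $\Gamma$. Alternatively, define $S$ as the set of positions whose location is a part, not the set of non-singleton positions. Then "at most one singleton" forces at most one position located in $F$, and your two cases are exhaustive. This version also proves the slightly stronger statement that at least two positions are located in $F$.
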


\begin{proof}[Proof of Claim~\ref{claim:two-singletons}]
Suppose first that none of the sets \(Z_h\) is a singleton. Then
\(Z_h\subseteq A_{s(h)}\) for every \(h\in[r-2]\). The \(r\) parts
$A_p,\ A_q,\ A_{s(1)},\dots,A_{s(r-2)}$
are pairwise distinct and every pair of them induces an edge. Indeed,
all required edges other than the one between \(A_p\) and \(A_q\)
occur in any fixed \(T_i\), while \(G[A_p,A_q]\) is non-empty by
assumption. They therefore span a copy of \(K_r\) in the auxiliary
graph \(\Gamma\), contradicting
\ref{item:total-Kr-free}.

Suppose next that exactly one \(Z_h\) is a singleton, say
\(Z_{h_0}=\{w\}\). If \(w\notin F\), let \(A_s\) be the part
containing \(w\). The same argument shows that
\[
A_p,\ A_q,\ A_s,\
\{A_{s(h)}:h\in[r-2]\setminus\{h_0\}\}
\]
span a copy of \(K_r\) in \(\Gamma\), again a contradiction.

It remains to consider \(w\in F\). For every
\(h\in[r-2]\setminus\{h_0\}\), the presence in \(T_i\) of the edges
\(wa_i\), \(wz_h^i\), \(a_iz_h^i\), and \(z_h^iz_{h'}^i\) implies,
using \ref{item:0-1-partition}, that
\[
A_p,\ A_q,\ \{A_{s(h)}:h\in[r-2]\setminus\{h_0\}\}
\]
are \(r-1\) pairwise distinct vertices of \(\Gamma_w\), every two of
which are adjacent. This gives a copy of \(K_{r-1}\) in
\(\Gamma_w\), contradicting \ref{item:neighbor-Kr-1-free}. The claim
follows.
\end{proof}

The locations of two different \(z\)-positions are distinct. Indeed,
the corresponding vertices are adjacent in each \(T_i\), whereas
every \(A_j\) is independent; moreover, two different positions
cannot be the same vertex of \(F\). Similarly, no \(z\)-position has
location \(A_p\) or \(A_q\). Consequently, for the two incomplete
blades \(T_1,T_2\), a vertex belongs to both blades precisely when it
occurs in the same \(z\)-position in both.

By Claim~\ref{claim:two-singletons}, after relabeling we may write
$V(T_1)\cap V(T_2)=W=\{w_1,\dots,w_{a+2}\}$
for some \(a\ge0\). Since the vertices \(a_1,b_1,a_2,b_2\) are all
distinct, \(|W|\le r-2\), and hence \(a\le r-4\). Put
\(
s:=r-a-4.
\)
After relabeling the remaining positions, we may write
\[
V(T_1)=
\{x_1,y_1,z_1^1,\dots,z_s^1,w_1,\dots,w_{a+2}\},
\]
\[
V(T_2)=
\{x_2,y_2,z_1^2,\dots,z_s^2,w_1,\dots,w_{a+2}\},
\]
where \(x_i=a_i\in A_p\), \(y_i=b_i\in A_q\), and \(x_iy_i\) is the
unique missing edge in \(T_i\). For every \(j\in[s]\), the distinct
vertices \(z_j^1,z_j^2\) lie in the same part of the partition and
are therefore non-adjacent. By the choice of \(L'\), the two cores
occupy corresponding positions: either
\[
c_1=x_1,\ c_2=x_2;\qquad
c_1=y_1,\ c_2=y_2;\qquad
c_1=z_j^1,\ c_2=z_j^2
\]
for some \(j\in[s]\), or \(c_1=c_2=w_h\) for some
\(h\in[a+2]\).

We shall repeatedly use the following two claims.

\begin{claim}\label{claim:Produce}
Let \(H_i^-\subseteq G\) have core \(c_i\) and incomplete blade
\(T_i\). Suppose that \(S\subseteq V(T_i)\) satisfies
\[
|S|=r-1,\qquad G[S]\cong K_{r-1},\qquad c_i\in S.
\]
If
\(
|N_G(S)|>|V(H_i^-)\cup F|,
\)
then \(G\) contains a copy of \(H\).
\end{claim}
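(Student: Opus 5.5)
Since $S\subseteq V(T_i)$ has $r-1$ vertices and spans a clique, while $a_ib_i$ is the unique missing edge of $G[V(T_i)]$, the set $S$ must omit exactly one of $a_i,b_i$. The idea is to keep $S$ as one blade of a new copy of $H$, completing it to a genuine $K_r$ by a single vertex, and to reuse the other $k-1$ blades of $H_i^-$ unchanged.

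First, recall the structure of $H_i^-$: it consists of the core $c_i$, the incomplete blade $T_i$, and $k-1$ further blades $D_1,\dots,D_{k-1}$. Each $D_j$ is a copy of $K_r$ in $G$ containing $c_i$. The sets $V(D_j)\setminus\{c_i\}$ are pairwise disjoint and disjoint from $V(T_i)$.

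Next, choose the extending vertex. By hypothesis $|N_G(S)|>|V(H_i^-)\cup F|$, so there is a vertex $u\in N_G(S)$ with $u\notin V(H_i^-)\cup F$. (Avoiding $F$ is not needed here; the hypothesis simply allows it.) Then $S\cup\{u\}$ induces a copy of $K_r$ in $G$, since $S$ is a clique and $u$ is adjacent to every vertex of $S$. It contains $c_i$ because $c_i\in S$.

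Then check disjointness. The vertices of $S\setminus\{c_i\}$ lie in $V(T_i)\setminus\{c_i\}$, which is disjoint from every $V(D_j)\setminus\{c_i\}$. The vertex $u$ lies outside $V(H_i^-)$, so it avoids all the $D_j$ as well. Hence the $k$ cliques $S\cup\{u\}, D_1,\dots,D_{k-1}$ pairwise intersect exactly in $c_i$. They therefore form a copy of $T_{r,k}=H$ in $G$.

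The only point needing care is confirming that $c_i$ is indeed the common vertex of all blades, which holds by the definition of the core. Beyond that, the claim is a direct counting step with no real obstacle.
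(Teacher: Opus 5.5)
Your proposal is correct and follows essentially the same route as the paper: pick $u\in N_G(S)\setminus(V(H_i^-)\cup F)$, note that $S\cup\{u\}$ spans a $K_r$ containing the core $c_i$, and swap it in for the incomplete blade $T_i$ while keeping the other $k-1$ blades. Your explicit disjointness check and your remark that avoiding $F$ is unnecessary here are accurate additions to the paper's one-line argument.
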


\begin{proof}[Proof of Claim~\ref{claim:Produce}]
Choose
\(
u\in N_G(S)\setminus(V(H_i^-)\cup F).
\)
Then \(G[S\cup\{u\}]\cong K_r\). Since \(c_i\in S\), replacing the
incomplete blade \(T_i\) by this copy of \(K_r\), while leaving the
other \(k-1\) blades unchanged, gives a copy of \(H\) in \(G\).
\end{proof}

\begin{claim}\label{claim:FewKr-2}
For every \(i\in[t]\), there is a constant
\(C=C(r,k,\varepsilon)>0\) such that
\[
C\bigl(K_{r-2},G[N_G(a_i,b_i)]\bigr)\le Cn^{r-3}.
\]
\end{claim}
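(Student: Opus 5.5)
The plan is to argue by contradiction, with $C$ chosen large in terms of $|F|$, which is bounded by $4k^2/\alpha_{\mathrm h}$ via Proposition~\ref{prop:finite-upset}\ref{item:|F|-small}. The aim is to produce a heavy edge with an endpoint outside $F$. Throughout, write $a=a_i$ and $b=b_i$. Recall that $a\in A_p$ and $b\in A_q$, so $a,b\notin F$, and that $ab\notin E(G)$.

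\emph{Step 1: a clean clique outside $F$.} Suppose $C(K_{r-2},G[N_G(a,b)])>Cn^{r-3}$. The number of $(r-2)$-sets meeting $F$ is at most $|F|n^{r-3}$. So if $C>|F|$, there is a copy $R\cong K_{r-2}$ in $G[N_G(a,b)]$ with $V(R)\cap F=\emptyset$. The $r$-set $X:=V(R)\cup\{a,b\}$ avoids $F$, and $G[X]$ is $K_r$ minus the single edge $ab$.

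\emph{Step 2: apply the dense-pair lemma.} Apply Lemma~\ref{lemma:quadraticCliques} to $X$ with $\ell=r$ and $t=r-2$; this is exactly how $\rho_0$ was defined. It gives a pair $W'\subseteq X$ with $C(K_{r-2},G[N_G(W')])\ge\rho_0n^{r-2}$.
\begin{itemize}
\item If $W'\neq\{a,b\}$, then $W'$ is an edge of $G$. Since $\alpha_{\mathrm h}\le\rho_0$, this edge is heavy, so both its endpoints lie in $F$. This contradicts $X\cap F=\emptyset$.
\item The remaining case is $W'=\{a,b\}$. Here we upgrade to the strong information $C(K_{r-2},G[N_G(a,b)])\ge\rho_0n^{r-2}$.
\end{itemize}

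\emph{Step 3: the case $W'=\{a,b\}$, which I expect to be the main obstacle.}
\begin{itemize}
\item First, I would repeat the averaging from the proof of Proposition~\ref{prop:finite-upset}. Set $f(u):=C(K_{r-3},G[N_G(a,b,u)])$. This yields at least $(\rho_0/2)n$ vertices $u\in N_G(a,b)$ with $f(u)\ge(\rho_0/2)n^{r-3}$. Pick such a $u\notin F$.
\item Next, apply Lemma~\ref{lemma:low level contains high level} with $W=\{a,b,u\}$, $k=3$ and $t=r-3$. The numerical condition is $k+2t=2r-3$, so it holds. The lemma gives a $2$-subset $W''\subseteq\{a,b,u\}$ with many copies of $K_{r-2}$ in its common neighbourhood.
\item If $W''=\{a,u\}$ or $W''=\{b,u\}$, then $W''$ is an edge, and I want it to be heavy. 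That would put $u$ (or $a$, $b$) in $F$, a contradiction.
\end{itemize}

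The difficulty is twofold. First, the output constant $\alpha'$ of Lemma~\ref{lemma:low level contains high level} must be at least $\alpha_{\mathrm h}$. I would handle this by checking that $\alpha_{\mathrm h}$ (taken as a minimum that includes the relevant $\lambda_j$) was chosen small enough to dominate every output constant produced in this chain; if necessary I would shrink $\alpha_{\mathrm h}$ at the start, which costs nothing elsewhere. Second, the output could again be $W''=\{a,b\}$. To rule this out, I would iterate over $u$ and look for a different exit. Alternatively, I would exploit the strong count $\rho_0n^{r-2}$ directly: pick $k$ vertices $u_1,\dots,u_k\notin F$ as above, with pairwise disjoint $K_{r-3}$'s in $N_G(a,b,u_j)$ chosen greedily. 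Each $\{a,u_j\}\cup K_{r-3}\cup\{b\}$ fails to be a $K_r$ only because $ab$ is missing. Dropping $b$ and using the linear number of choices in $N_G(a,u_j)$ should still complete $k$ copies of $K_r$ sharing only $a$, i.e.\ a copy of $T_{r,k}$ with core $a$. Making this last completion step rigorous is where I expect the real work to lie.
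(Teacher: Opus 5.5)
Your Step 1 and the first bullet of Step 2 are correct. The case $W'=\{a,b\}$, however, is a genuine gap, and the local density arguments you propose cannot close it.

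\paragraph{Why the ingredients are insufficient.} Your argument only uses $a,b\notin F$, $ab\notin E(G)$, the heavy-edge structure, the degree condition and $T_{r,k}$-freeness. These do not imply the claim. For example, take $r=4$ and let $G$ be a maximal $T_{4,k}$-free supergraph of a balanced complete tripartite graph on $V_1\sqcup V_2\sqcup V_3$.
\begin{enumerate}
\item Every added edge lies inside some $V_i$ and is heavy, since its common neighbourhood contains $V_2\cup V_3$ (or the analogous pair of classes). Hence both its endpoints lie in the bounded set $F$.
\item Two vertices $a,b\in V_1\setminus F$ are therefore non-adjacent and lie outside $F$.
\item Yet $G[N_G(a,b)]\supseteq G[V_2,V_3]$ has at least $n^2/9$ edges.
\end{enumerate}
So the case $W'=\{a,b\}$ really occurs here. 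Lemma~\ref{lemma:low level contains high level} applied to $\{a,b,u\}$ can only return $\{a,b\}$, because $N_G(a,u)=V_3$ is essentially independent. Likewise, the triangles $\{a,u_j\}\cup V(L_j)$ meet all three classes and have no common neighbours, so your proposed $T_{r,k}$ with core $a$ cannot be completed. Any valid proof must use information that distinguishes the pair $(a_i,b_i)$ in Lemma~\ref{lemma:MTTFree} from such a pair.

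\paragraph{The missing idea.} You never use the hypothesis of Lemma~\ref{lemma:MTTFree} that $G[A_p,A_q]$ is non-empty, nor Proposition~\ref{prop:preprocess}\ref{item:total-Kr-free}. With these, the claim follows immediately from your Step 1.
\begin{enumerate}
\item Take the copy $R$ with $V(R)\cap F=\emptyset$. Since each part is independent and $R$ is a clique, its vertices lie in $r-2$ distinct parts $A_{s_1},\dots,A_{s_{r-2}}$.
\item None of these parts is $A_p$ or $A_q$, because every vertex of $R$ is adjacent to $a_i\in A_p$ and to $b_i\in A_q$.
\item Consider the $r$ parts $A_p, A_q, A_{s_1},\dots,A_{s_{r-2}}$. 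The edges of $R$ and the edges from $a_i,b_i$ to $R$ show that every pair of these parts spans an edge, except possibly $\{A_p,A_q\}$. That last pair spans an edge by hypothesis.
\item Hence these parts form a $K_r$ in $\Gamma$, a contradiction.
\end{enumerate}
Therefore every $K_{r-2}$ in $G[N_G(a_i,b_i)]$ meets $F$, and the count is at most $|F|\binom{n}{r-3}$. This is the paper's proof. Your Steps 2 and 3, including the worry about comparing $\alpha'$ with $\alpha_{\mathrm h}$, become unnecessary.
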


\begin{proof}[Proof of Claim~\ref{claim:FewKr-2}]
Fix \(i\in[t]\). We first show that every copy of \(K_{r-2}\) in
\(G[N_G(a_i,b_i)]\) meets \(F\). Otherwise, let \(K\) be such a copy
with \(V(K)\cap F=\emptyset\). Since the parts \(A_j\) are independent,
the vertices of \(K\) lie in \(r-2\) distinct parts
\(A_{s_1},\dots,A_{s_{r-2}}\), none of which is \(A_p\) or \(A_q\).
The parts
\[
A_p,\ A_q,\ A_{s_1},\dots,A_{s_{r-2}}
\]
are pairwise distinct and every pair of them induces an edge:
between \(A_p\) and \(A_q\) this follows from the assumption on
\(p,q\), and all remaining edges are witnessed by \(a_i,b_i\) and
the vertices of \(K\). They span a copy of \(K_r\) in \(\Gamma\),
contradicting \ref{item:total-Kr-free}.

Thus every such \(K_{r-2}\) contains a vertex of \(F\), and consequently
\[
C\bigl(K_{r-2},G[N_G(a_i,b_i)]\bigr)
\le |F|\binom{n}{r-3}
\le Cn^{r-3},
\]
because \(|F|=O_{r,k,\varepsilon}(1)\).
\end{proof}

\begin{claim}\label{claim:WhenA=0}
If \(a=0\), then \(w_1w_2\) is a heavy edge. Moreover, neither \(w_1\)
nor \(w_2\) is the core of \(H_1^-\) or \(H_2^-\).
\end{claim}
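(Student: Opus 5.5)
The plan is to prove the two assertions separately. The second follows quickly from the first, so I start with it. Suppose $w_1w_2$ is heavy and, say, $w_1$ is the core $c_1$ of $H_1^-$. Then $w_2$ lies in the incomplete blade $T_1$ and in no other blade of $H_1^-$. Since $C(K_{r-2},G[N_G(w_1,w_2)])\ge\alpha_{\mathrm h}n^{r-2}$ while only $O_{r,k}(n^{r-3})$ copies of $K_{r-2}$ meet the bounded set $V(H_1^-)$, I can choose a copy $R$ of $K_{r-2}$ in $N_G(w_1,w_2)$ avoiding $V(H_1^-)$. Replacing $T_1$ by the genuine $K_r$ on $\{w_1,w_2\}\cup V(R)$ and keeping the other $k-1$ blades gives $k$ copies of $K_r$ meeting exactly in $w_1$, that is, a copy of $H$ in $G$, a contradiction. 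The same argument applies to $H_2^-$ and to $w_2$. Since the cores occupy corresponding positions and $c_1=c_2$ in the $w$-case, this covers all possibilities.

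For heaviness of $w_1w_2$, I would use the chain of constants $\lambda_1,\dots,\lambda_{r-2}$ built before the definition of heavy edges (for $r=4$ the chain is trivial, and I would instead argue directly via $\rho_0$ on a suitable $4$-set). Take $c:=w_1$ and work in $G_c=G[N_G(w_1)]$. When $a=0$, the set $V(T_1)\cup V(T_2)\setminus\{w_1\}$ lies in $N_G(w_1)$, and I would extract from it a $(2r-5)$-set $P$ containing $w_2$, dropping at most two vertices of $T_2$. The defining property of $\sigma$ then gives $u\in P$ with $|N_G(\{w_1\}\cup P\setminus\{u\})|\ge\lambda_1 n$. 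Next I would pass to an $(r-1)$-subset $W_1$ of $\{w_1\}\cup(P\setminus\{u\})$ containing $w_1$, with $|N_G(W_1)|\ge\lambda_1 n$, and apply Lemma~\ref{lemma:low level contains high level} with $(k_j,t_j)=(r-j,j)$ for $j=1,\dots,r-3$. This produces nested sets $W_1\supset W_2\supset\cdots\supset W_{r-2}$, where $|W_{r-2}|=2$ and $C(K_{r-2},G[N_G(W_{r-2})])\ge\lambda_{r-2}n^{r-2}\ge\alpha_{\mathrm h}n^{r-2}$.

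The remaining task, which is the core of the argument, is to show that at every stage the surviving set must still contain both $w_1$ and $w_2$. Then $W_{r-2}=\{w_1,w_2\}$, and this pair is an edge since $w_1,w_2\in V(T_1)$. To rule out each alternative I would use three tools:
\begin{itemize}[label=(\alph*)]
\item Non-edges of the configuration. The pairs $x_1x_2$, $y_1y_2$ and $z^1_jz^2_j$ lie in common parts, and $x_iy_i$ are non-edges. Any $W_j$ containing such a pair has empty common-clique structure in the relevant sense, or fails to span a clique when it should.
\item Claim~\ref{claim:FewKr-2}. This rules out $W_{r-2}=\{x_i,y_i\}$, and more generally any final pair whose common neighbourhood sees too few $K_{r-2}$'s because of the partition structure, by the same $\Gamma$-argument.
\item Claim~\ref{claim:Produce}. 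If some intermediate $W_j$ has size $r-1$, spans a $K_{r-1}$ inside some $T_i$, and contains the core $c_i$, then $|N_G(W_j)|\ge\lambda n$ exceeds $|V(H_i^-)\cup F|$, which is forbidden.
\end{itemize}

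I expect the main obstacle to be this elimination step. The choice of $P$ and of $W_1$, namely which two vertices of $T_2$ to drop and how to handle the unknown omitted vertex $u$, must be made so that every branch of the nested choice in Lemma~\ref{lemma:low level contains high level} is killed by one of the three tools, uniformly over the possible core positions. I would first try taking $W_1$ to be a $K_{r-1}$ inside $T_1$ containing $w_1,w_2$ together with the core position. In that case Claim~\ref{claim:Produce} immediately forbids the initial step unless the core is absent, and I would then do a case analysis on which of $x_1,y_1,z^1_j$ is discarded at each step.
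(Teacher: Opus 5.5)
\textbf{The second assertion is fine.} Your argument there is the paper's: given that $w_1w_2$ is heavy, choose $R\cong K_{r-2}$ in $G[N_G(w_1,w_2)]$ avoiding $V(H_i^-)$ and use it in place of $T_i$.

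\textbf{The gap is in the heaviness part.} You name the elimination step as the main obstacle and leave it open, and your three tools cannot close it.
Tool (a) is not valid. Two non-adjacent vertices such as $x_1,x_2\in A_p$ can have a large common neighbourhood with many cliques in it, so a non-edge inside $W_j$ gives no bound on the clique count in $G[N_G(W_j)]$.
Tool (c) applies only to $(r-1)$-sets, that is, only to $W_1$. None of your tools touches the intermediate sets $W_2,\dots,W_{r-3}$.
Lemma~\ref{lemma:low level contains high level} gives no control over which vertex is discarded at each step. So you cannot force $w_1,w_2$ to survive every stage. Already at the start, the vertex $u$ supplied by $\sigma$ may be $w_2$.

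\textbf{The missing idea} is that intermediate stages never need tracking. Only the final pair matters, and membership in $F$ pins it down.
\begin{itemize}
\item The vertices $x_i\in A_p$ and $y_i\in A_q$ lie outside $F$.
\item Each non-shared $z_j^i$ also lies outside $F$. Indeed $z_j^1\neq z_j^2$ share a location, so that location is a part, not a vertex of $F$.
\item By the definition of $F$, both endpoints of every heavy edge lie in $F$.
\end{itemize}
Now let $\{u,v\}\subseteq V(T_i)$ satisfy $C(K_{r-2},G[N_G(u,v)])\ge\alpha_{\mathrm h}n^{r-2}$. Either $\{u,v\}=\{x_i,y_i\}$, which Claim~\ref{claim:FewKr-2} excludes for large $n$. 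Or $uv$ is an edge, hence heavy, hence both endpoints are in $F$, which forces $\{u,v\}=\{w_1,w_2\}$.

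Such a pair exists directly. Apply the defining property of $\rho_0$ (Lemma~\ref{lemma:quadraticCliques} with $\ell=r$, $t=r-2$) to the $r$-set $V(T_i)$. This works for every $r\ge4$. It is exactly what you held back as a fallback for $r=4$, and it is the paper's entire proof.

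With this observation you do not need $G_{w_1}$, the $(2r-5)$-set, or the $\lambda_j$-chain. Your chain would also finish if you took $W_1\subseteq V(T_1)$, but it is redundant.
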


\begin{proof}[Proof of Claim~\ref{claim:WhenA=0}]
When \(a=0\), we have \(s=r-4\) and
\[
V(T_i)=
\{x_i,y_i,z_1^i,\dots,z_{r-4}^i,w_1,w_2\}
\qquad (i=1,2).
\]
The vertices \(x_i,y_i,z_1^i,\dots,z_{r-4}^i\) all lie outside \(F\):
this is clear for \(x_i,y_i\), and each pair \(z_j^1,z_j^2\) consists
of two distinct vertices in a common part \(A_{s(j)}\).

Apply Lemma~\ref{lemma:quadraticCliques} to \(V(T_i)\), with
\(\ell=r\) and \(t=r-2\). It gives a pair
\(\{u,v\}\subseteq V(T_i)\) such that
\[
C\bigl(K_{r-2},G[N_G(u,v)]\bigr)
\ge \rho_0n^{r-2}
\ge \alpha_{\mathrm h}n^{r-2}.
\]
If \(\{u,v\}\) is neither \(\{x_i,y_i\}\) nor
\(\{w_1,w_2\}\), then \(uv\in E(G)\), and at least one of \(u,v\)
lies outside \(F\). The displayed inequality would make \(uv\) a
heavy edge, contradicting the definition of \(F\). If
\(\{u,v\}=\{x_i,y_i\}\), the displayed inequality contradicts
Claim~\ref{claim:FewKr-2} for all sufficiently large \(n\). Hence
\(\{u,v\}=\{w_1,w_2\}\). Since \(w_1w_2\in E(G)\), this edge is
heavy.

Suppose now, by symmetry, that \(w_1\) is the core of \(H_i^-\) for
some \(i\in\{1,2\}\). The heaviness of \(w_1w_2\) gives at least
\(\alpha_{\mathrm h}n^{r-2}\) copies of \(K_{r-2}\) in
\(G[N_G(w_1,w_2)]\). The number of these copies meeting
\(V(H_i^-)\cup F\) is at most
\[
|V(H_i^-)\cup F|n^{r-3}
=O_{r,k,\varepsilon}(n^{r-3}).
\]
We may therefore choose
$R\cong K_{r-2}\subseteq G[N_G(w_1,w_2)]$
disjoint from \(V(H_i^-)\cup F\). Replacing \(T_i\) by the copy of
\(K_r\) on \(\{w_1,w_2\}\cup V(R)\) gives a copy of \(H\) in \(G\),
a contradiction.
\end{proof}

We now rule out every possible value of \(a\). The induction below is
taken within the class of configurations just described: the missing
endpoints lie in \(A_p,A_q\), corresponding non-shared \(z\)-vertices
lie in a common independent part, and the two core positions agree.

Suppose first that \(a=0\). By Claim~\ref{claim:WhenA=0}, the common
core position does not lie in \(W=\{w_1,w_2\}\). At least one of
\(x_1,y_1\) is therefore not the core of \(H_1^-\). Interchanging
\(p\) and \(q\) if necessary, we may assume that \(x_1\) is not the
core; then \(x_2\) is not the core of \(H_2^-\). Set
$P:=\bigl(V(T_1)\cup V(T_2)\bigr)\setminus\{x_1\}$.
Since \(V(T_1)\cap V(T_2)=\{w_1,w_2\}\), we have \(|P|=2r-3\).
Lemma~\ref{lemma:quadraticCliques}, applied with
\(\ell=2r-3\) and \(t=1\), gives a vertex \(u\in P\) such that
\begin{equation}\label{eq:a0-large-common}
|N_G(P\setminus\{u\})|\ge\beta n
\end{equation}
for some \(\beta=\beta(r,\varepsilon)>0\).

If \(u\in V(T_2)\setminus W\), then
\[
S_1:=V(T_1)\setminus\{x_1\}\subseteq P\setminus\{u\}.
\]
The set \(S_1\) spans \(K_{r-1}\) and contains the core of \(H_1^-\).
Thus \eqref{eq:a0-large-common} and Claim~\ref{claim:Produce} give a
copy of \(H\) for all sufficiently large \(n\), a contradiction.
Similarly, if \(u\in V(T_1)\setminus(\{x_1\}\cup W)\), use
\(
S_2:=V(T_2)\setminus\{x_2\}.
\)
It follows that \(u\in W\); by symmetry, assume that \(u=w_1\). Then
\begin{equation}\label{eq:a0-T2-common}
|N_G(V(T_2)\setminus\{w_1\})|\ge\beta n.
\end{equation}

The edge \(w_1w_2\) is heavy, so \(w_2\in F\). We claim that
$N_G(V(T_2)\setminus\{w_1\})\subseteq F$.
Otherwise choose
\(
v\in N_G(V(T_2)\setminus\{w_1\})\setminus F
\)
and let \(A(v)\) be its part. By
\ref{item:0-1-partition}, all the \(r-1\) parts
\[
A(v),\ A_p,\ A_q,\ A_{s(1)},\dots,A_{s(r-4)}
\]
are contained in \(N_G(w_2)\). They are pairwise distinct, and every
pair induces an edge: the edges incident with \(A(v)\) are witnessed
by \(v\), the edge between \(A_p,A_q\) exists by assumption, and all
other edges occur in \(T_2\). These parts therefore span a copy of
\(K_{r-1}\) in \(\Gamma_{w_2}\), contradicting
\ref{item:neighbor-Kr-1-free}. The claim and
\eqref{eq:a0-T2-common} now give
\(
\beta n\le |F|,
\)
which is impossible for all sufficiently large \(n\). This settles
the case \(a=0\).

Assume now that \(a\ge1\), and that all configurations with smaller
overlap parameter have already been ruled out. We first consider the
case in which the common core position does not lie in \(W\). For
\(i=1,2\), choose \(e_i\in\{x_i,y_i\}\) that is not the core of
\(H_i^-\), and set
\(
S_i:=V(T_i)\setminus\{e_i\}.
\)
Then \(G[S_i]\cong K_{r-1}\) and \(c_i\in S_i\).

Let \(P:=V(T_1)\cup V(T_2)\). Since
\(
|P|=2r-a-2\le2r-3,
\)
Lemma~\ref{lemma:quadraticCliques}, applied with
\(\ell=2r-a-2\) and \(t=1\), gives \(u\in P\) such that
\begin{equation}\label{eq:gen-a-large-common}
|N_G(P\setminus\{u\})|\ge\beta_a n
\end{equation}
for some \(\beta_a=\beta_a(r,\varepsilon)>0\).
If \(u\in V(T_1)\setminus W\), then
\(
S_2\subseteq P\setminus\{u\};
\)
if \(u\in V(T_2)\setminus W\), then
\(
S_1\subseteq P\setminus\{u\}.
\)
In either case Claim~\ref{claim:Produce} gives a copy of \(H\), a
contradiction.

We may therefore assume that \(u\in W\). Since the common core
position is outside \(W\), the vertex \(u\) is not the core of either
copy. By \eqref{eq:gen-a-large-common}, after deleting the bounded set
\(F\cup V(H_1^-)\cup V(H_2^-)\), linearly many vertices remain in
\(N_G(P\setminus\{u\})\). Since \(M\) is bounded independently of
\(n\), some part \(A_\sigma\) contains two distinct vertices
\[
v_1,v_2\in N_G(P\setminus\{u\})
\setminus\bigl(F\cup V(H_1^-)\cup V(H_2^-)\bigr).
\]
They are non-adjacent because \(A_\sigma\) is independent. Define
\[
T_1':=G[(V(T_1)\setminus\{u\})\cup\{v_1\}],
\qquad
T_2':=G[(V(T_2)\setminus\{u\})\cup\{v_2\}].
\]
Each \(T_i'\) is a copy of \(K_r^-\) with the same missing edge as
\(T_i\). Since \(u\) is not the core, replacing \(T_i\) by \(T_i'\)
in \(H_i^-\) gives a new copy \(H_i^{-\prime}\subseteq G\) with the
same core position. Moreover,
\[
V(T_1')\cap V(T_2')=W\setminus\{u\},
\]
and the new vertices \(v_1,v_2\) form a non-shared pair in the same
part \(A_\sigma\). Thus the new configuration has overlap parameter
\(a-1\), contradicting the induction hypothesis.

It remains to consider the case in which the common core belongs to
\(W\). Write \(c:=c_1=c_2=w_{a+2}\), let
\(m_c:=|N_G(c)|\), and put \(G_c:=G[N_G(c)]\). As in the choice of
the heavy-edge constant,
\begin{equation}\label{eq:Gc-min-degree}
\delta(G_c)\ge
\left(\theta_{r-1}+\frac{\varepsilon}{2}\right)m_c.
\end{equation}

Suppose first that \(a\ge2\), and set
$P_c:=\bigl(V(T_1)\cup V(T_2)\bigr)\setminus\{c\}$.
Then \(|P_c|=2r-a-3\) and
\(
|P_c|+1\le2(r-1)-2.
\)
By Lemma~\ref{lemma:quadraticCliques}, applied inside \(G_c\) with
\(r-1\) in place of \(r\), \(\ell=2r-a-3\), and \(t=1\), there is
\(u\in P_c\) such that
\begin{equation}\label{eq:Gc-large-common}
|N_{G_c}(P_c\setminus\{u\})|\ge\beta_a' n
\end{equation}
for some \(\beta_a'=\beta_a'(r,\varepsilon)>0\).

If \(u\in V(T_1)\setminus W\), choose
\(e_2\in\{x_2,y_2\}\) and set
\(
S_2^\circ:=V(T_2)\setminus\{c,e_2\}.
\)
Then \(S_2^\circ\subseteq P_c\setminus\{u\}\). The set
\(
S_2:=S_2^\circ\cup\{c\}
\)
spans a copy of \(K_{r-1}\), contains the core, and satisfies
\[
|N_G(S_2)|
=|N_{G_c}(S_2^\circ)|
\ge\beta_a'n.
\]
Claim~\ref{claim:Produce} gives a contradiction. The case
\(u\in V(T_2)\setminus W\) is symmetric.

Hence \(u\in W\setminus\{c\}\). As above, choose distinct vertices
\[
v_1,v_2\in A_\sigma\cap N_{G_c}(P_c\setminus\{u\})
\setminus\bigl(F\cup V(H_1^-)\cup V(H_2^-)\bigr)
\]
in a common part \(A_\sigma\). Replacing \(u\) in \(T_1,T_2\) by
\(v_1,v_2\), respectively, gives a configuration with overlap
parameter \(a-1\), contradicting the induction hypothesis.

Finally, suppose that \(a=1\). Then
\(
W=\{w_1,w_2,c\}
\)
and \(s=r-5\). Set
\[
P':=\bigl(V(T_1)\cup V(T_2)\bigr)\setminus\{c,x_1\}.
\]
The set \(P'\) is contained in \(N_G(c)\), has size \(2r-5\), and
therefore the choice of \(\sigma\) and \(\lambda_1\) gives a vertex
\(u\in P'\) such that
\begin{equation}\label{eq:a1-Gc-large-common}
|N_{G_c}(P'\setminus\{u\})|\ge\lambda_1n.
\end{equation}

If \(u\in V(T_1)\setminus(\{c,x_1\}\cup W)\), then
\[
S_2^\circ:=V(T_2)\setminus\{c,x_2\}
\subseteq P'\setminus\{u\}.
\]
If \(u\in V(T_2)\setminus W\), then
\[
S_1^\circ:=V(T_1)\setminus\{c,x_1\}
\subseteq P'\setminus\{u\}.
\]
In either case, \(S_i:=S_i^\circ\cup\{c\}\) is a copy of
\(K_{r-1}\) containing the core and has at least \(\lambda_1n\)
common neighbors. Claim~\ref{claim:Produce} yields a contradiction.
Hence \(u\in\{w_1,w_2\}\); by symmetry, assume \(u=w_2\).

Set
\[
P''':=V(T_2)\setminus\{w_2\}
=\{x_2,y_2,z_1^2,\dots,z_{r-5}^2,w_1,c\}.
\]
Since
\(
V(T_2)\setminus\{c,w_2\}\subseteq P'\setminus\{w_2\}
\)
and every vertex of \(N_{G_c}(P'\setminus\{w_2\})\) is adjacent to
\(c\), \eqref{eq:a1-Gc-large-common} implies
\[
C\bigl(K_1,G[N_G(P''')]\bigr)\ge\lambda_1n.
\]

Starting with \(W_1:=P'''\), apply
Lemma~\ref{lemma:low level contains high level} successively for
\(j=1,\dots,r-3\), using
\[
k'=r-j,\qquad t'=j,\qquad \alpha=\lambda_j.
\]
At step \(j\), the numerical condition is
\(
(r-j)+2j=r+j\le2r-3.
\)
By the recursive choice of the constants \(\lambda_j\), after the
last step we obtain a pair \(U=\{p',q'\}\subseteq P'''\) such that
\begin{equation}\label{eq:final-heavy-pair}
C\bigl(K_{r-2},G[N_G(p',q')]\bigr)
\ge\lambda_{r-2}n^{r-2}
\ge\alpha_{\mathrm h}n^{r-2}.
\end{equation}

The only non-edge in \(G[P''']\) is \(x_2y_2\). If
\(U=\{x_2,y_2\}\), then \eqref{eq:final-heavy-pair} contradicts
Claim~\ref{claim:FewKr-2} for all sufficiently large \(n\).
Therefore \(p'q'\in E(G)\), and \eqref{eq:final-heavy-pair} shows that
\(p'q'\) is heavy.

Every vertex of
\(
\{x_2,y_2,z_1^2,\dots,z_{r-5}^2\}
\)
lies outside \(F\). Since both endpoints of a heavy edge lie in \(F\),
the only remaining possibility is
\(
\{p',q'\}=\{w_1,c\}.
\)
The heaviness of \(w_1c\) gives
\(\alpha_{\mathrm h}n^{r-2}\) copies of \(K_{r-2}\) in
\(G[N_G(w_1,c)]\). For sufficiently large \(n\), one of these copies,
say \(R\), is disjoint from \(V(H_2^-)\cup F\). Replacing \(T_2\) by
the copy of \(K_r\) on \(\{w_1,c\}\cup V(R)\) gives a copy of \(H\)
in \(G\), the final contradiction.

This completes the induction and the proof of the lemma.
\end{proof}



\section{Concluding remarks}
In this paper we determine a larger family beyond cliques the exact values of homomorphism threshold.
The value $\frac{2r-5}{2r-3}$ is one of the three possible values of the chromatic threshold of a graph with chromatic number $r$.
So it is interesting to ask the following.
\begin{problem}
    Determine the family 
    $\{H: \delta_{\textup{hom}}(H) = \frac{2r-5}{2r-3}\}.$
\end{problem}

\paragraph{Acknowledgements.}
This work was initiated during our visit to the Institute for Basic Science (IBS) in 2024. We thank Hong Liu for his hospitality.

\bibliographystyle{abbrv}
\bibliography{Clique-fan}

\appendix
\section{Proof of Theorem~\ref{thm:hom-clique-fan} when $r=3$ and $s=1$}
\begin{proof}
Let $G$ be a maximal $T_{3,k}$-free graph with $\delta(G)\ge (1/3+\eps)n $ where $n = |V(G)|$.
We first isolate the edges which are forced by triangles. For every triangle $xyz$ in $G$, a simple inclusion-exclusion argument shows that one of the three pairs has at least $\eps n/3$ common neighbors. Indeed, if all three pairwise common neighborhoods had size less than $\eps n/3$, then
\[
        d(x)+d(y)+d(z)
        \le n+|N(x)\cap N(y)|+|N(x)\cap N(z)|+|N(y)\cap N(z)|
        <(1+\eps)n,
\]
contradicting $d(x)+d(y)+d(z)\ge (1+3\eps)n$.

Call an edge $uv$ \emph{heavy} if $u$ and $v$ lie in a triangle and
\[
        |N(u)\cap N(v)|\ge \frac{\eps n}{3}.
\]
Let $F$ be the subgraph of $G$ whose edges are the heavy edges and whose vertices are the endpoints of heavy edges.

\begin{claim}\label{clm:triangle-heavy-small}
$|V(F)|\le 6k^2/\eps$ for all sufficiently large $n$.
\end{claim}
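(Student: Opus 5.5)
We bound $|V(F)|$ exactly as in Proposition~\ref{prop:finite-upset}: bound the maximum degree of the heavy-edge graph, bound its matching number, and then combine the two. Write $J$ for the graph of heavy edges, so $V(J)=V(F)$.

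\emph{Step 1: $\Delta(J)\le k-1$.} Suppose some vertex $v$ has heavy neighbours $u_1,\dots,u_k$. Each pair $vu_i$ has at least $\eps n/3$ common neighbours. For $n$ large, $\eps n/3>3k$, so we can choose the vertices greedily: pick distinct $z_i\in N(v)\cap N(u_i)$ with each $z_i$ avoiding $\{v,u_1,\dots,u_k\}$ and all previously chosen $z_j$. The triangles $vu_iz_i$ then meet only in $v$, so they form a copy of $T_{3,k}$, a contradiction.

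\emph{Step 2: $\nu(J)\le 2k/\eps$.} Take a matching $\{x_iy_i:i\in[m]\}$ in $J$ and set $U_i:=N(x_i)\cap N(y_i)$, so $|U_i|\ge \eps n/3$. If $m>3k/\eps$, then $\sum_i|U_i|>kn$, so by pigeonhole some vertex $w$ lies in at least $k$ of the sets $U_i$ (with $k+1$ in the sum one can ensure this, and adjusting the constant handles it).

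We need $w$ to differ from the $2k$ matching vertices involved. We first discard the at most $2m$ matching vertices, since each one lies in only boundedly many $U_i$ relevant to it; alternatively, we simply count, for each $i$, the vertices of $U_i$ outside $\bigcup_j\{x_j,y_j\}$. Note that $w\ne x_j,y_j$ is automatic for $j$ with $w\in U_j$, because $x_j\notin N(x_j)$. If $w$ equals some $x_{j'}$ for another index $j'$, the triangles $wx_iy_i$ still meet only in $w$ provided the pairs are distinct, and they are, because matching edges are disjoint and $w$ is not an endpoint of the $k$ chosen edges.

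So the triangles $wx_{i_j}y_{i_j}$, $j\in[k]$, form $T_{3,k}$, a contradiction. To get the constant exactly, we use $m\le 3k/\eps$, or more carefully $2k/\eps$ via $|U_i|\ge \eps n/3$ and $m\,\eps n/3\le (k-1)n$, which gives $m\le 3(k-1)/\eps$.

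\emph{Step 3: bound $|V(F)|$.} Take a maximal matching $\mathcal M$ in $J$. Every vertex of $J$ is covered by $V(\mathcal M)$ or adjacent in $J$ to it, so
\[|V(F)|\le 2|\mathcal M|(\Delta(J)+1)\le 2\cdot\tfrac{3k}{\eps}\cdot k=\tfrac{6k^2}{\eps}.\]

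The only delicate point is the pigeonhole constant in Step 2, which must match $6k^2/\eps$. Using $m\eps n/3\le (k-1)n$, valid since no vertex lies in $k$ of the $U_i$, gives $m<3k/\eps$, which suffices.
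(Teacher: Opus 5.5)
Your proof is correct and follows the paper's argument: a greedy degree bound $\Delta(J)\le k-1$, a pigeonhole bound on the matching number via the common neighbourhoods $U_i$, and then a maximal matching. Your count $m\,\eps n/3\le (k-1)n$, giving $m\le 3(k-1)/\eps$, even yields $6k^2/\eps$ cleanly, whereas the paper's $3(k+1)/\eps$ relies on ``adjusting the constant''. You should, however, delete the unjustified claim $\nu(J)\le 2k/\eps$ in the heading of Step 2 and the muddled aside about discarding matching vertices, since the bound you actually prove and use is $3(k-1)/\eps$ and $w\notin\{x_j,y_j\}$ holds automatically.
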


\begin{proof}
First, $\Delta(F)\le k-1$. Otherwise, if $v$ is incident with $k$ heavy edges $vu_1,\ldots,vu_k$, then we can greedily choose distinct vertices
\[
        w_i\in N(v)\cap N(u_i)
\]
for $i=1,\ldots,k$, obtaining $k$ triangles $vu_iw_i$ sharing the core $v$, i.e. a copy of $T_{3,k}$.

Second, the matching number of $F$ is at most $3(k+1)/\eps$. Indeed, suppose $F$ contains a matching $x_1y_1,
\ldots,x_my_m$ with $m>3(k+1)/\eps$. Each pair $x_i,y_i$ has at least $\eps n/3$ common neighbors. By averaging, some vertex of $G$ is a common neighbor of at least $k$ of these matched pairs; those $k$ pairs together with this vertex give a copy of $T_{3,k}$.

Taking a maximal matching of $F$, every vertex of $F$ is either in the matching or adjacent to one of its endpoints. Since $\Delta(F)\le k-1$ and the matching size is $O(k/\eps)$, we get $|V(F)|\le 6k^2/\eps$ after adjusting the constant.
\end{proof}

Every triangle of $G$ contains a heavy edge, so $G_0:=G-V(F)$ is triangle-free. Since $|V(F)|=O_{k,\eps}(1)$,
\[
        \delta(G_0)\ge \left(\frac13+\frac{\eps}{2}\right)|G_0|
\]
for all sufficiently large $n$. By Theorem~\ref{thm:KrHomo} with $r=3$, $G_0$ is homomorphic to a triangle-free graph $Q$ of order at most $C_0=C_0(\eps)$. Let
\[
        V(G_0)=Q_1\cup\cdots\cup Q_m
\]
be the corresponding partition into independent classes, with $m\le C_0$.

Refine this partition according to neighborhoods in $V(F)$: two vertices of the same $Q_i$ remain in the same refined part if and only if they have the same neighborhood in $V(F)$. Thus we obtain a bounded partition
\[
        V(G)=V(F)\cup \bigcup_{i=1}^M A_i,
\]
where $M\le C_0 2^{|V(F)|}$, each $A_i$ is independent, and every vertex in $V(F)$ is either complete or anti-complete to each $A_i$. Let $L$ be the quotient graph of this partition, where vertices of $V(F)$ are kept as singleton parts and where two quotient vertices are adjacent if the corresponding two parts contain at least one edge of $G$.

We claim first that $L$ is $T_{3,k}$-free. Suppose not, and let $u$ be the core of a copy of $T_{3,k}$ in $L$.

If $u\in V(F)$, then each blade triangle of the quotient can be realised as a genuine triangle of $G$: singleton vertices are already vertices of $G$, while any non-singleton quotient class consists of vertices with the prescribed adjacency to $V(F)$, and edges between non-singleton classes exist by definition. Greedily realising the $k$ blades gives a copy of $T_{3,k}$ in $G$, a contradiction.

If $u$ is one of the refined parts outside $V(F)$, then the triangle-free nature of $Q$ implies that in each blade triangle of $L$ the two other quotient vertices must both belong to $V(F)$. Since the part corresponding to $u$ is complete to those two singleton vertices, each blade again lifts to a triangle in $G$, and the $k$ lifted triangles share the same chosen vertex from the part $u$. This again gives $T_{3,k}$ in $G$.

Therefore $L$ is $T_{3,k}$-free.

It remains to show that $G$ is the blow-up of $L$. Construct $G'$ from $G$ by completing every pair of refined parts which has at least one edge. By construction, $G'=L[\cdot]$ and $G\subseteq G'$. We claim that $G'$ is still $T_{3,k}$-free. Indeed, any triangle in $G'$ must contain at least two vertices from $V(F)$: outside $V(F)$ the quotient refines a triangle-free graph, and if a triangle had exactly one vertex in $V(F)$, then the definition of the refinement by neighborhoods in $V(F)$ would already realise such a triangle in $G$, contradicting the fact that every triangle of $G$ contains a heavy edge and hence at least two vertices of $V(F)$.

Now any copy of $T_{3,k}$ in $G'$ can be lifted blade by blade to a copy of $T_{3,k}$ in $G$, using the preceding observation. This is impossible. Hence $G'$ is $T_{3,k}$-free. Since $G$ is maximal $T_{3,k}$-free and $G\subseteq G'$, we must have $G=G'=L[\cdot]$.
\end{proof}

\end{document}